\providecommand{\edt}{\textcolor{black}}
\newtheorem{thm}{Theorem}[section]
\newtheorem{lem}[thm]{Lemma}
\theoremstyle{definition}
\newtheorem{defn}[thm]{Definition}
\newtheorem{rem}{Remark}
\numberwithin{equation}{section}
\title{On the indefinite Helmholtz equation: \edt{complex stretched} absorbing boundary layers, iterative analysis, and preconditioning}
\author{B. Reps, W. Vanroose, H. bin Zubair}
\begin{document}

%\begin{frontmatter}

\maketitle
\begin{center}
\it{Department of Mathematics and Information Technology, University of Antwerp, Middelheim Campus, Middelheimlaan 1, 2020 Antwerp, Belgium}
\end{center}

\begin{abstract}
\edt{This paper studies and analyzes a preconditioned Krylov solver for Helmholtz problems that are formulated with absorbing boundary layers based on complex coordinate stretching. The preconditioner problem is a Helmholtz problem where not only the coordinates in the absorbing layer have an imaginary part, but also the coordinates in the interior region. This results into a preconditioner problem that is invertible with a multigrid cycle. We give a numerical analysis based on the eigenvalues and evaluate the performance with several numerical experiments. The method is an alternative to the complex shifted Laplacian and it gives a comparable performance for the studied model problems.}
\end{abstract}

%\begin{keyword}
%Complex \edt{stretched grid} (CSG) preconditioner, Complex shifted Laplacian (CSL) preconditioner, Multigrid preconditioning (\edt{MGP}), %Bi-CGSTAB, IDR($s$), Exterior complex scaled \edt{(ECS)} absorbing boundary layers.
%\end{keyword}
%\pacs{02.60.Dc, 02.60.Lj, 02.70.-c, 02.70.Bf, 03.65.-w}
% PACS Classification codes
%02.60.Dc Numerical linear algebra
%02.60.Lj Ordinary and partial differential equations; boundary value problems
%02.70.-c Computational techniques; simulations (for quantum computation, see 03.67.Lx; for computational techniques extensively used in subdivisions of physics, see the appropriate section; for example, see 47.11.-j Computational methods in fluid dynamics)
%02.70.Bf Finite-difference methods
%03.65.-w Quantum mechanics [see also 03.67.-a Quantum information; 05.30.-d Quantum statistical mechanics; 31.30.J- Relativistic and quantum electrodynamics (QED) effects in atoms, molecules, and ions in atomic physics]

%\end{frontmatter}

% main text
%%%%%%%%%%%%%%%%%%%%%%%%%%%%%%%%

\section{Introduction}
\edt{The Helmholtz equation is frequently used to model propagation of waves in applications such as acoustic, seismic and electromagnetic realistic systems. It is less known that the equation is also helpful to understand and predict the reaction rates of fundamental processes in few-body physics and chemistry that are important for many areas of technology. In gas discharge reactors that are used industrially for chemical processing of surfaces, for example, these reaction rates are an essential modeling input \cite{BNGM02}. There is both a scientific interest and an industrial need for accurate prediction of reaction rates \cite{Plasma07}. To predict accurately the reaction rates of these processes it is necessary to solve the multi-dimensional Schr\"odinger equation \cite{RBIM99,VMRM05} that is, for the energy regime of these processes, equivalent to a multi-dimensional Helmholtz equation with outgoing waves boundary conditions and a space dependent wave number. The reaction rate of a particular process is then found as a post-processing step where the fluxes of the outgoing waves, corresponding to a particular reaction, are extracted \cite{MHR01}.}

\edt{These applications, often in diverse fields, have little in common amongst them except the Helmholtz model used in the simulations. Naturally, the numerical solution of the indefinite Helmholtz equation forms an interesting field of research for a widespread scientific community. Two main numerical concerns in this context are the truncation of the infinite physical domain to a finite numerical one mapped on a grid, and the efficient iterative solution of the resulting indefinite discrete linear system. In this paper, we concentrate on the latter of these issues.}

\edt{A hard truncation of the physical domain (in the Dirichlet sense) on a numerically feasible finite boundary, results in waves reflecting back and propagating into the truncated domain. These artificial reflections have to be avoided for an acceptable numerical treatment of the Helmholtz equation. This consideration led to the commonly used boundary conditions by Engquist and Majda \cite{EM77} and Bayliss and Turkel \cite{BT80} who approximated the Sommerfeld radiation condition for homogeneous media at the boundary. B\'erenger avoided reflecting waves by adding \emph{perfectly matched layers} (PML) \cite{B94} to the truncated domain. On this absorbing extension the problem is reformulated in order to damp the solution exponentially. Both techniques have been generalized and fine-tuned by many authors, see e.g.\ \cite{G08} for an overview. Chew and Weedon \cite{CW94} related the PML method to a complex coordinate stretching. They consider the absorbing layer as an analytical continuation of the space domain into the complex plane, where the original equation is preserved. Earlier, in the 70's, linear complex scaling of the space domain was already used as a method to compute atomic resonances in microscopic system described by the Schr\"odinger equation \cite{AC71,BC71}. There a coordinate transformation, $r \rightarrow r\exp(i\theta)$ with a angle $\theta> 0$, was applied on the full domain. In the same decade, Simon introduced \emph{exterior complex scaling} (ECS) by only transforming the boundary region with the transformation $r \rightarrow (R_0-r)\exp(i\theta) + r$, where $R_0$ denotes the start of the boundary region. The purpose of this transformation was to introduce the correct boundary condition for resonant states while avoiding analytical continuation of non-analytical potentials in the Schr\"odinger equation. Later this ECS transformation was used to enforce outgoing wave boundary conditions to atomic break-up problems \cite{RBIM99}. Note that the ECS transformation has a discontinuous first derivative in $R_0$, while in PML the complex stretching is usually introduced as a smooth coordinate transformation.}

\edt{The purpose of this paper is to find efficient iterative solvers for Helmholtz problems equipped with absorbing boundary layers based on complex coordinate stretching. The main iterative challenge in a discrete Helmholtz problem $H_h u_h=b_h$ is the indefiniteness of the discretized operator $H_h$. A powerful way to get around this issue is the use of preconditioned Krylov subspace methods. The original troublesome matrix $H_h$ is multiplied by the inverse of the preconditioning matrix $M_h$, resulting in a new system $M_h^{-1}H_h = M_h^{-1}b_h$ for left preconditioning. The choice of the preconditioner is a trade off between a cheaply invertible matrix $M_h$ and a definite preconditioned system $M_h^{-1}H_h$ with nicely clustered eigenvalues. The former demand can be relaxed by allowing an inexact inversion of the preconditioner $M_h$, e.g.\ a few sweeps of a multigrid method. A successful preconditioner in this setup of \emph{multigrid preconditioning} (MGP) of Krylov methods is the \emph{complex shifted Laplacian} (CSL) developed for Sommerfeld radiation conditions by Erlangga, Vuik and Oosterlee \cite{EVO04}.}

\edt{In this paper we study the effect of complex stretching a part of the domain on the eigenvalues of the discretized Helmholtz equation. We have chosen to study the simplest problem with complex coordinate stretching which is the ECS domain as introduced by Simon and still used for atomic break-up problems. We have analyzed the eigenvalues of the one-dimensional Laplacian discretized on an ECS domain. Then the achieved insights are used to apply the CSL preconditioning idea to two-dimensional Helmholtz problems with ECS boundaries. The theoretical analysis also leads to an alternative family of preconditioners based on different complex stretchings of the numerical grid (CSG). Although ECS is not the most accurate absorbing boundary condition, we believe the insights on the performance of the iterative solver are valid for problems where a smooth complex stretching transformation is introduced.}

The remainder of this paper is organized as follows. In Section \ref{sec:ECS}, we describe the ECS absorbing boundary layer. We use a one-dimensional Laplace problem for theoretical considerations; this reference problem is also described in Section \ref{sec:ECS}. The numerical analysis of the discrete one-dimensional reference problem is given next in Section \ref{sec:NAdisc}. Three important lemmas are given here, the insights from which paved the way for the work in this paper. We use three model problems for experimentation. They possess particular properties, which we detail in Section \ref{sec:MP}. Section \ref{sec:prec} follows, and deals with preconditioning ideas. Multigrid behavior as a solver as well as for approximate preconditioner inversion is discussed. We also calibrate multigrid performance with different components. Numerical validation of all the theoretical insights is given in Section \ref{sec:experiments} where the model problems are solved with multigrid preconditioned Bi-CGSTAB \cite{V92} and IDR($s$) \cite{SG08} (with $s=4$ \edt{and $s=8$}), with the CSL and the CSG preconditioning operators, and accompanied by comparison tables.

\edt{\section{Exterior complex stretched domains} \label{sec:ECS}}
%--------------------
% Intro on ECS
%--------------------
The Helmholtz equation in a homogeneous medium is
\begin{defn}[Helmholtz]
\begin{equation}\label{eq:helmctu0}
Hu \equiv -\left(\triangle +k^2\right) u = \chi \mbox{ \edt{in} } \Omega_0\subseteq\mathbb{R}^d,
\end{equation}
with dimension $d\geq1$.
\end{defn}
where $\chi$ is a source term, $k\in\mathbb{R}$ is called the wave number. The equation describes acoustic wave
problems on an unbounded domain $\Omega_0$. We want to solve the Helmholtz equation \eqref{eq:helmctu0} numerically on a bounded part of the domain $\Omega\subset\Omega_0$, with \edt{absorbing boundary layers}.

For the Helmholtz equation \eqref{eq:helmctu0} restricted to the bounded domain $\Omega$, a popular boundary condition is
the first order Sommerfeld boundary condition, given by,
\begin{equation}\label{eq:sommerfeld1}
\frac{\partial u}{\partial \hat{n}} = -\imath ku \text{ on } \partial\Omega,
\end{equation}
\edt{where $\partial \Omega$ represents the domain boundary and $\hat{n}$, the outward normal. We write $\imath$ for the complex identity.} In \edt{multi-}dimensional Helmholtz problems absorbing boundary layers are preferred over these classical first order Sommerfeld conditions because the latter requires an exact knowledge of the wave number at the boundary. \edt{More important, in higher dimensions condition \eqref{eq:sommerfeld1} suffers from artificial reflections and a higher order version should be applied \cite{EM77,BT80}. Equation \eqref{eq:sommerfeld1} is still useful for the analysis of iterative methods though.} The physical interpretation of absorbing boundary layers is to extend the original domain $\Omega$ with a layer $\Gamma$ of an absorbing material. \edt{In} $\Omega$ the original equation is kept and \edt{in} the layer $\Gamma$ the equation is manipulated to enforce specific boundary conditions on the new boundaries $\partial\Gamma$, through an adapted potential due to a change in the material. This was introduced as a \emph{perfectly matched layer} (PML) by B\'erenger~\cite{B94}. The original PML idea is mathematically equivalent to a particular complex coordinate \edt{stretching} \cite{CW94} in the boundary layers, where the original equation is used in \edt{a} new coordinate system. \edt{In this complex stretching approach we define an analytic continuation on the layers by}
\begin{equation} \label{eq:ecstrans}
 z(x) = \left\{
  \begin{array}{ll}
    x, & \hbox{$x \in \Omega$;} \\
    x+\imath f(x), & \hbox{$x \in \Gamma$,}
  \end{array}
\right.
\end{equation}
with $f \in \mathcal{C}^2$ \edt{the \emph{stretching function},} increasing (e.g. linear, quadratic, \ldots) and $\displaystyle \lim_{x\to\partial\Omega}f(x) = 0$. We denote the image of the layer $\Gamma_z\equiv z(\Gamma)$ and call it the \emph{complex contour}. These robust boundary layers do not \edt{use the wave number explicitly as opposed to the Sommerfeld conditions \eqref{eq:sommerfeld1}} and they can easily be tuned in numerical experiments.
The transformation \eqref{eq:ecstrans} constructs the absorbing complex contour by adding a \emph{complex shift} to the domain extension $\Gamma$\edt{, but it can also be done} by a \emph{complex rotation} of $\Gamma$. For a one-dimensional problem \edt{in} $\Omega = [x_0,r]$ with linear complex \edt{stretching} applied \edt{to} the extension $\Gamma = [r,R]$, with an angle \edt{$\theta$} this is typically
\begin{equation*}
z(x) = (x-r)e^{\imath\theta}+r, \quad\hbox{$x \in \Gamma$},
\end{equation*}
\edt{and was introduced by Simon as \emph{exterior complex scaling} \cite{S79}}.
\edt{On discrete level} the mesh width on the contour $\Gamma_z$ becomes $h_\gamma = he^{\imath\edt{\theta}}$. We point out that although the above definition suffices well for practical purposes, as for the experiments presented in this paper, we will keep the analytic discussion general by using the expression \eqref{eq:ecstrans}.

%--------------------
% Laplace problem
%--------------------
We will study the effect of \edt{a complex stretching} transformation on a simple one-dimensional Laplace problem
\begin{equation}\label{eq:lapctu}
-Lu(x) \equiv -\frac{d^2}{d x^2} u(x) = \chi(x) \mbox{ \edt{in} } [0,1] \subset \mathbb{R},
\end{equation}
with $u(0)=0$ and an absorbing boundary condition in $x=1$. The minus sign is introduced to make the Laplacian positive definite. The homogeneous Helmholtz problem only differs in a constant shift $k^2$ and shares the same eigenvectors. An eigenvalue of the Helmholtz problem is $\lambda_{L}-k^2$ where $\lambda_L$ is an eigenvalue of the negative Laplacian $L$. \edt{However, this minor difference can turn the definite problem into a indefinite one, with major consequences on the behavior of iterative methods such as Krylov methods and multigrid methods.} The numerical results in Section~\ref{sec:experiments} \edt{show} that the results are easily extended to two dimensions, and that the same \edt{preconditioning ideas} can be useful \edt{for different values of $k$} and for non-homogeneous Helmholtz problems.

We now implement the complex stretched boundary layer on the one-dimensional Laplace problem \eqref{eq:lapctu} by adding an extension
$\Gamma = [1,R]$ with $1<R\in\mathbb{R}$ to construct the complex contour $\Gamma_z = z(\Gamma)\subset\mathbb{C}$. In this paper, we
use a linear \edt{coordinate transformation on the layer} so that $\Gamma_z$ is the complex line connecting $1$ and $z(R)\equiv R_z\in\mathbb{C}$ that we will \edt{denote} as $[1,z(R)]=[1,R_z]$. This transforms the problem to
\begin{equation}\label{eq:lapctuecs}
-L u(z) \equiv -\frac{d^2}{d z^2} u(z) = \chi(z) \mbox{ \edt{in} } [0,1]\cup[1,R_z]\subset \mathbb{C},
\end{equation}
with homogeneous Dirichlet boundary conditions \edt{at} $z(0)=0$ and $z(R)=R_z$ (see Figure \ref{fig:ctuecs}). \edt{In the remainder of the paper will refer to this linear stretching \eqref{eq:lapctuecs} as the \emph{exterior complex scaled} (or \emph{stretched}) transformation or in short ECS.}
% fig: fig_ctuecs
\begin{figure}[htbp!]
\begin{center}
\includegraphics[width=12cm]{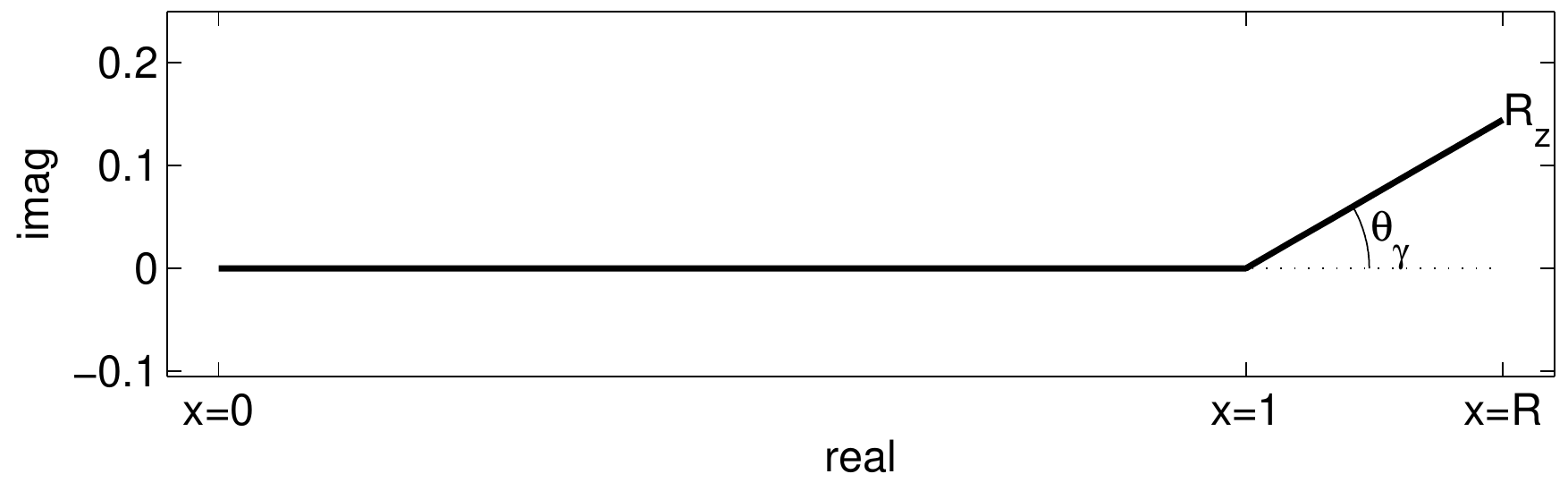}
\caption{The ECS domain $z(x)$. An ECS contour is added as an extension of the domain at $x=1$. This shifts the domain and consequently the spectrum of the resulting operator into the complex plane.}\label{fig:ctuecs}
\end{center}
\end{figure}
The two boundary points $z=0$ and $z=R_z$ determine the position of the eigenvalues independently of the path connecting the two points. This is readily obvious by inspecting the following equation, which gives the eigenvalues of the negative Laplacian on any one-dimensional curve in the complex plane connecting the points $0$ and $R_z$:
\begin{equation*}
\lambda_L = \left(\frac{j \pi}{R_z}\right)^2 \text{ with } j\in\mathbb{N}_0.
\end{equation*}
The derivation is trivial and hence not shown here. The complex contour acts as an absorbing layer. \edt{Indeed, because of the exponential decay of the analytically continued solution one can enforce homogeneous Dirichlet boundary conditions at the end of the complex contour with a significantly smaller boundary error than on a real truncated domain \cite{KP09}, as is illustrated in Figure~\ref{fig:ecsillustration}.
% fig: ecs
\begin{figure}[htbp!]
\begin{center}
\includegraphics[width=12cm]{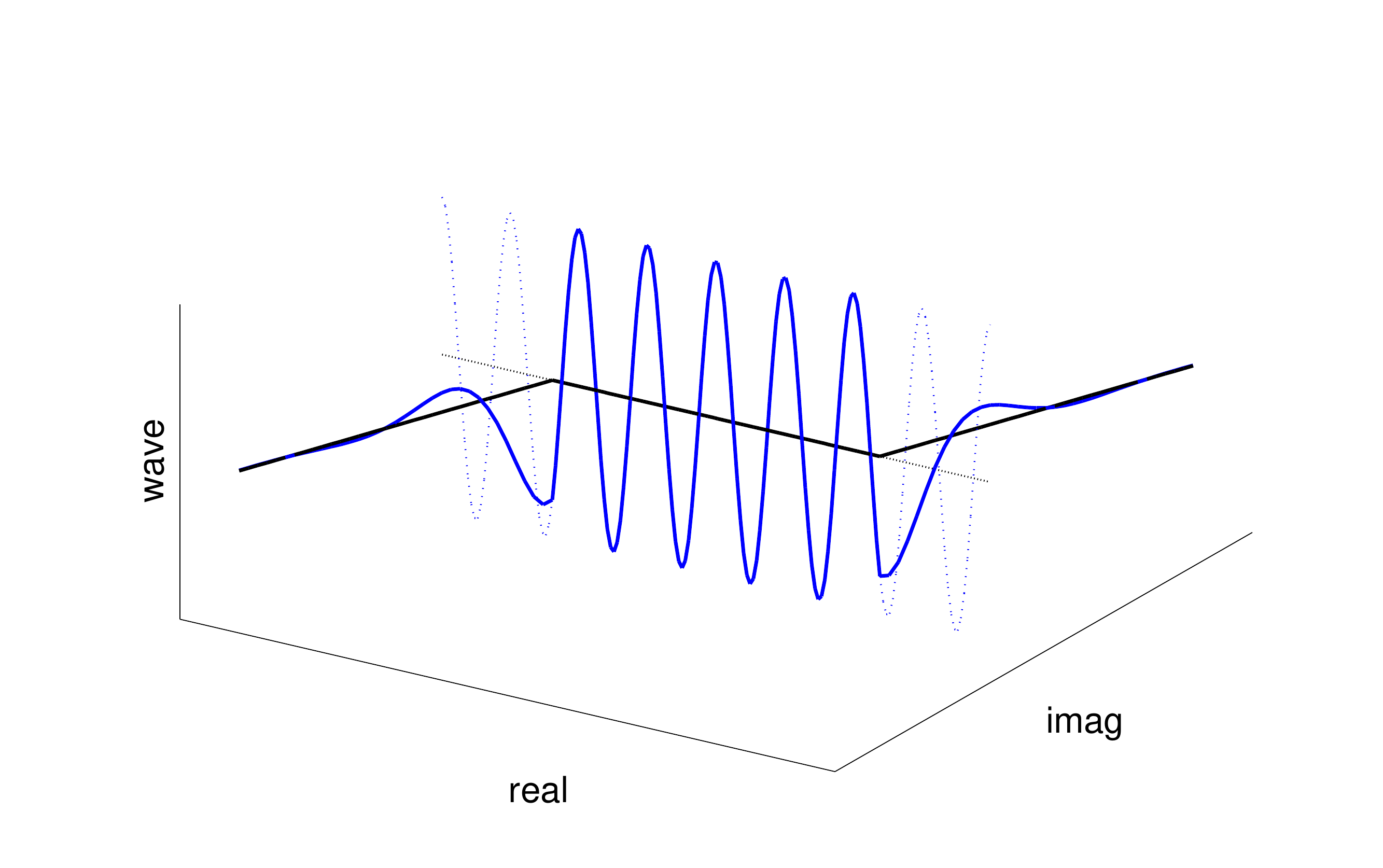} \caption{ECS on a one-dimensional domain. A complex contour is added to both boundaries, enforcing the wave to fulfill Dirichlet boundary conditions. $e^{\imath\kappa z(x)}=e^{\imath\kappa\left(x+\imath f(x)\right)}=e^{\imath\kappa
x}e^{-\kappa f(x)}\approx0$, (color online)}\label{fig:ecsillustration}
\end{center}
\end{figure}
The larger the imaginary part of the complex boundary, the stronger the suppression of the reflected waves. Although the shape of the contour, i.e.\ the stretching function $f(x)$, does not explicitly influence the damping in the continuous case, the discretized problem is susceptible to different shapes, as we will see in the next section.}
\edt{
\begin{rem}[Accuracy of ECS layers]
A detailed discussion on the accuracy of ECS layers does not lie within the scope of this paper; we merely use the simple linear ECS in \eqref{eq:lapctuecs} as a model to understand the iterative solution of the problems with more advanced complex stretched boundary layers.
\end{rem}
}

\section{Numerical analysis of discretized operator} \label{sec:NAdisc}
%------------------------------------
% discretization of Laplace problem
%------------------------------------
In this section we discretize the Laplace problem \eqref{eq:lapctuecs} with finite differences using the
Shortley-Weller formula for non-uniform vertex centered grids \cite{SW38}. It enables discretization through the region of transition to complex mesh widths for the complex contour in the ECS domain. We present theoretical results for the eigenvalues of
the discretization matrix.

Consider the one-dimensional Laplace problem \eqref{eq:lapctuecs}. We define a uniform grid
\begin{equation*}
(z_j)_{0\leq j\leq n} \mbox{ on } [0,1]
\end{equation*}
with $z_0 = 0$ and $z_n=1$ and mesh width $h = 1/n \in\mathbb{R}$, and a second uniform grid on the complex
contour
\begin{equation*}
(z_j)_{n\leq j\leq n+m} \mbox{ on } [1,R_z]
\end{equation*}
with $z_{n+m} = R_z$ and complex mesh width $h_\gamma = (R_z-1)/m$. \edt{We will refer to the angle of $h_\gamma$ in the complex plane as the ECS angle and denote it $\theta_\gamma$.} The union of these two grids is the \edt{ECS} grid
\begin{equation}\label{eq:ecsgrid}
(z_j)_{0\leq j\leq n+m} \mbox{ on } [0,1]\cup[1,R_z]
\end{equation}
in the entire ECS domain. We will often use the fraction \edt{$\gamma = h_\gamma/h\in\mathbb{C}$}. To approximate the second derivative in \eqref{eq:lapctuecs} we choose the Shortley-Weller formula
\begin{equation*}\label{eq:shortwell}
\frac{d^2u}{d z^2}(z_j) \approx
\frac{2}{h_{j-1}+h_j}\left(\frac{1}{h_{j-1}}u_{j-1}-\left(\frac{1}{h_{j-1}}+\frac{1}{h_j}\right)u_j
+\frac{1}{h_j}u_{j+1}\right)
\end{equation*}
for non-uniform grids in grid point $j$, where $h_{j-1}$ and $h_j$ are the left and right mesh widths respectively, and may belong either to the $h$ category or to the $h_\gamma$ category. \edt{The formula is easily derived from Taylor series and} reduces to regular second order central differences when $h_{j-1}=h_j$, i.e., in the interior real region $(0,1)$, and in the interior of the complex contour $(1,R_z)$ because the \edt{stretching} function $f$ is taken to be linear. The only exception is the point $z_n$ where at most we lose an order of accuracy, however with ample discretization steps, the overall accuracy is anticipated to match up to second order.
% fig: fig_disecs
\begin{figure}[htbp!]
\begin{center}
\includegraphics[width=12cm]{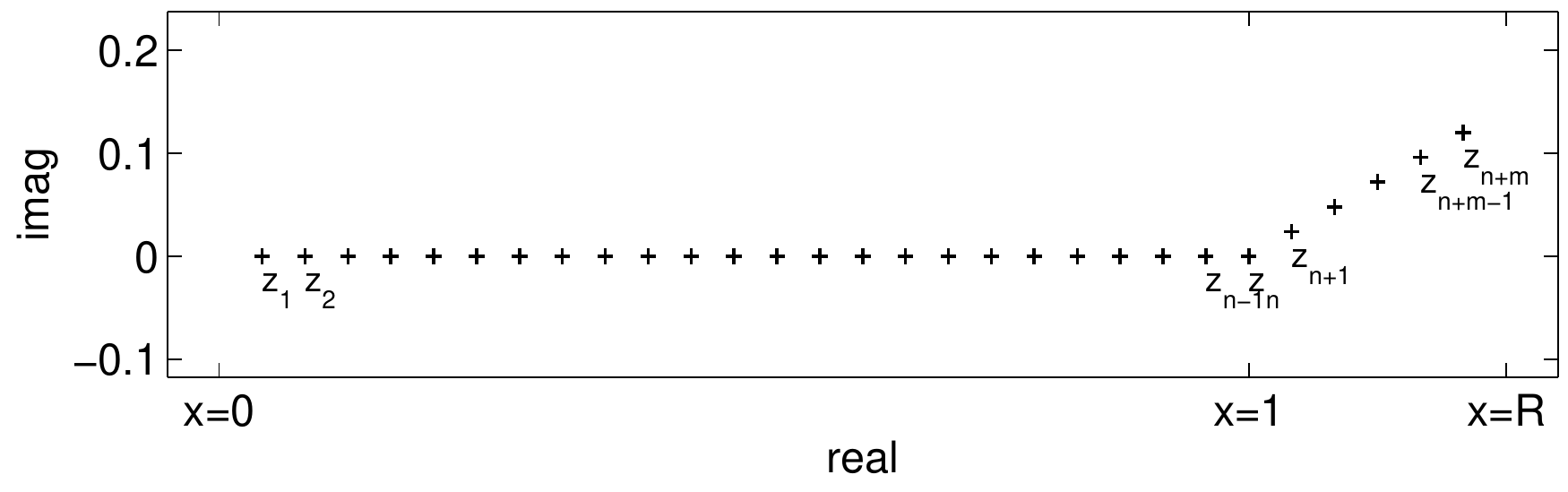}\caption{Discretized ECS domain $z_j$. The ECS domain is discretized with complex mesh widths on the complex contour.}\label{fig:disecs}
\end{center}
\end{figure}
The result is a linear system of equations that we will represent by the matrix equation
\begin{equation}\label{eq:lapdisecs}
-L_h u_h = b_h.
\end{equation}
The right hand side $b_h$ contains contributions from the \edt{source function $\chi$}.
%--------------------------------------
% spectrum of discrete Laplace problem
%--------------------------------------
The spectrum of the discretization matrix $L_h$ in \eqref{eq:lapdisecs} determines the convergence behavior of iterative methods such as Krylov subspace methods and multigrid schemes for solving the system. \edt{It} is drastically different from the spectrum of the continuous operator $L$. We start with the construction of bounds on the field of values of $-L_h$.

The remainder of this section is focused on several lemmas that will help to understand the spectral properties of the discretized operator. First, in Lemma \ref{lem:eigdisbound} the Gershgorin disks are used to produce bounds on the spectrum. Next, in Lemma \ref{lem:eigdis} we find a condition for the eigenvalues of the \edt{discrete ECS Helmholtz operator} for constant wave numbers. The solutions lie on a \edt{pitchfork-shaped figure}. In Lemma \ref{lem:pitchfork} of this section, it is shown how an approximation can be found for the limiting points of this spectrum.
%------------------
% lemma eigdisbound
\begin{lem}\label{lem:eigdisbound}
Consider the ECS grid \eqref{eq:ecsgrid} and the discretization matrix $L_h$ in Equation~\eqref{eq:lapdisecs}. Define $\gamma =
\frac{h_\gamma}{h}$\edt{, and its complex conjugate $\bar{\gamma}$}. If $\lambda\in\sigma(-L_h)$, then
\begin{eqnarray*}
\Re(\frac{1}{\gamma^2})-\left|\frac{1}{2\gamma^2}+\frac{1}{\bar{\gamma}(1+\bar{\gamma})}\right|\leq &h^2\Re(\lambda)& \leq \edt{\max(4,3+\frac{1}{2}\left|\frac{3+\bar{\gamma}}{1+\bar{\gamma}}\right|)}\\
\Im(\frac{4}{\gamma^2})\leq &h^2\Im(\lambda)& \leq \frac{1}{2}\left|\frac{|\gamma|^2-\gamma}{|\gamma|^2+\gamma}\right|\\
\end{eqnarray*}
where $\Re$ and $\Im$ denote the real and imaginary part, respectively.
\end{lem}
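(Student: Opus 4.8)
The plan is to realize the four stated inequalities as Gershgorin bounds applied not to $-L_h$ itself, but to the Hermitian and skew-Hermitian parts that control its field of values. First I would write the matrix $A \equiv h^2(-L_h)$ explicitly. Because the stretching function is linear, the Shortley-Weller stencil collapses to the constant-coefficient stencil $(-1,2,-1)$ on the interior real nodes and to $\frac{1}{\gamma^2}(-1,2,-1)$ on the interior contour nodes, while the homogeneous Dirichlet conditions merely trim the first and last rows. The one genuinely irregular row is the interface node $z_n$, where $h_{j-1}=h$ and $h_j=h_\gamma=\gamma h$; substituting into the Shortley-Weller formula and multiplying by $h^2$ gives the asymmetric stencil $\left(-\frac{2}{1+\gamma},\ \frac{2}{\gamma},\ -\frac{2}{\gamma(1+\gamma)}\right)$. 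Thus $A$ is a complex tridiagonal matrix that is symmetric everywhere except in the two couplings straddling $z_n$.

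The central observation is that $\sigma(A)\subseteq W(A)$, the field of values, and that the real and imaginary parts of $W(A)$ are exactly the (real) numerical ranges of the Hermitian matrices $H=\tfrac12(A+A^*)$ and $S=\tfrac{1}{2\imath}(A-A^*)$, i.e. the intervals $[\lambda_{\min}(H),\lambda_{\max}(H)]$ and $[\lambda_{\min}(S),\lambda_{\max}(S)]$. Hence $h^2\Re(\lambda)$ and $h^2\Im(\lambda)$ are bounded by the extreme eigenvalues of $H$ and $S$, which I would then bound by Gershgorin. This is precisely what forces the conjugate $\bar\gamma$ into the statement: each off-diagonal entry of $H$ or $S$ pairs an entry of $A$ with the conjugate of its transpose partner.

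Next I would compute $H$ and $S$ row by row. Their diagonals are $\Re(a_{jj})$ and $\Im(a_{jj})$; on the symmetric part the skew couplings cancel in $H$ and survive in $S$ (and conversely). The only delicate entries are the two interface couplings, $H_{n,n+1}=-\frac{1}{\gamma(1+\gamma)}-\frac{1}{2\bar\gamma^2}$ and $S_{n-1,n}=\frac{1-\bar\gamma}{2\imath(1+\bar\gamma)}$, whose moduli are exactly $\left|\frac{1}{2\gamma^2}+\frac{1}{\bar\gamma(1+\bar\gamma)}\right|$ and $\frac12\left|\frac{|\gamma|^2-\gamma}{|\gamma|^2+\gamma}\right|$ once one uses invariance of the modulus under conjugation and the identity $\frac{|\gamma|^2-\gamma}{|\gamma|^2+\gamma}=\frac{\bar\gamma-1}{\bar\gamma+1}$. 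The four stated bounds then fall out of four specific Gershgorin disks: the lower real bound from the disk of node $n+1$ in $H$ (centre $2\Re(1/\gamma^2)$, radius contributions $|\Re(1/\gamma^2)|$ and $|H_{n,n+1}|$); the upper real bound as the larger of the interior-real disk (right edge $4$) and the disk of node $n-1$ in $H$ (right edge $3+\frac12|\frac{3+\bar\gamma}{1+\bar\gamma}|$); the lower imaginary bound from an interior contour disk in $S$ (centre $2\Im(1/\gamma^2)$, radius $2|\Im(1/\gamma^2)|$); and the upper imaginary bound from the disk of node $n-1$ in $S$.

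The main obstacle is not any single computation but the bookkeeping that confirms these four disks are the extremal ones, so that the stated numbers are valid global bounds rather than merely the edge of some disk. This needs (i) the modulus simplifications at the interface above, and (ii) mild sign information on $\gamma$, concretely $\Re(1/\gamma^2)\ge 0$ and $\Im(1/\gamma^2)\le 0$, which hold for the ECS angles of interest. With these, the generic interior-contour disk collapses, since $2\Re(1/\gamma^2)-2|\Re(1/\gamma^2)|=0$, leaving the interface disk binding for the lower real bound, while $2\Im(1/\gamma^2)-2|\Im(1/\gamma^2)|=4\Im(1/\gamma^2)$ produces the clean $\Im(4/\gamma^2)$ on the imaginary side. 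Once these sign conditions are in place, comparing the finitely many remaining disk edges is routine.
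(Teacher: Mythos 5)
Your proposal follows essentially the same route as the paper's proof: bounding $\sigma(-h^2L_h)$ by the field of values, splitting into the Hermitian part $\tfrac12(A+A^*)$ and skew-Hermitian part, and applying Gershgorin to each, with the four stated bounds coming from exactly the same extremal disks (interface-adjacent contour row and interior rows for the real part; the row $n-1$ and interior contour rows for the imaginary part). Your interface-entry computations, the identity $\frac{|\gamma|^2-\gamma}{|\gamma|^2+\gamma}=\frac{\bar\gamma-1}{\bar\gamma+1}$, and the sign conditions $\Re(1/\gamma^2)\geq 0$, $\Im(1/\gamma^2)\leq 0$ for the admissible ECS angles are all consistent with (and in places slightly more explicit than) the paper's argument.
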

\begin{proof}
Every eigenvalue of a matrix lies in the field of values of that matrix. For the field of values \edt{$W(A_N)$} of a matrix \edt{$A_N=(a_{ij})_{1\leq i,j\leq N}$}
holds
\begin{eqnarray*}
\min\{\Re(W(A_N))\} &=& \min\{\mu\in\mathbb{R}:\mu\in\sigma(\frac{A_N+A_N^*}{2})\} \\
\max\{\Re(W(A_N))\} &=& \max\{\mu\in\mathbb{R}:\mu\in\sigma(\frac{A_N+A_N^*}{2})\} \\
\min\{\Im(W(A_N))\} &=& \min\{\mu\in\mathbb{R}:\mu\in-\imath\sigma(\frac{A_N-A_N^*}{2})\} \\
\max\{\Im(W(A_N))\} &=& \max\{\mu\in\mathbb{R}:\mu\in-\imath\sigma(\frac{A_N-A_N^*}{2})\}, \\
\end{eqnarray*}
\edt{with $A_N^*$ the adjoint matrix of $A_N$.} We scale the model problem with $h^2$ and construct the Gershgorin disks of $-h^2\frac{L_h+L_h^*}{2}$ and $-h^2\frac{L_h-L_h^*}{2}$. The eigenvalues of a matrix lie on the union of its Gershgorin disks. They are defined as
\begin{equation*}
\mathcal{D}_i = \{z\in\mathbb{C}:|z-a_{jj}|\leq\sum_{j\neq i}^{N}|a_{ij}|\}.
\end{equation*}
The scaled operator is
\begin{equation*}
-h^2L_h = \left(
  \begin{array}{ccccccccc}
     2&-1&                  &                                                     &                                  &                         &                  &                  &                  \\
    -1& 2&    -1            &                                                     &                                  &                         &                  &                  &                  \\
      &  &\ddots            &                                                     &                                  &                         &                  &                  &                  \\
      &  &    -1            &               2                                     &         -1                       &                         &                  &                  &                  \\
      &  &                  &-\frac{2}{1+\gamma}                                   &2\frac{1+\frac{1}{\gamma}}{1+\gamma}&-\frac{2}{\gamma(1+\gamma)}&                  &                  &                  \\
      &  &                  &                                                     &-\frac{1}{\gamma^2}                &\frac{2}{\gamma^2}        &-\frac{1}{\gamma^2}&                  &                  \\
      &  &                  &                                                     &                                  &                         &\ddots            &                  &                  \\
      &  &                  &                                                     &                                  &                         &-\frac{1}{\gamma^2}&\frac{2}{\gamma^2} &-\frac{1}{\gamma^2}\\
      &  &                  &                                                     &                                  &                         &                  &-\frac{1}{\gamma^2}&\frac{2}{\gamma^2} \\
  \end{array}
\right)
\end{equation*}
\edt{We will use the notation $B(c,r)\subset\mathbb{C}$ for the ball centered around $c\in\mathbb{C}$ with radius $r\in\mathbb{R}$.} So the matrix $-h^2\frac{L_h+L_h^*}{2}$ has seven distinct Gershgorin disks
\begin{eqnarray*}
i=1:                    \mathcal{D}_1 &=& B\left(2,1\right) \\
\forall 2\leq i\leq n-2:\mathcal{D}_2 &=& B\left(2,2\right) \\
i=n-1:                  \mathcal{D}_3 &=& \edt{B\left(2,1+\frac{1}{2}\left|\frac{3+\bar{\gamma}}{1+\bar{\gamma}}\right|\right)} \\
i=n:                    \mathcal{D}_4 &=& \edt{B\left(2\Re(\frac{1+\frac{1}{\gamma}}{1+\gamma}),\left(\frac{1}{2}\left|\frac{3+\gamma}{1+\gamma}\right|+\left|\frac{1}{2\bar{\gamma}^2}+\frac{1}{\gamma(1+\gamma)}\right|\right)\right)} \\
j=m-1:                  \mathcal{D}_5 &=& B\left(2\Re(\frac{1}{\gamma^2}),\Re(\frac{1}{\gamma^2})+\left|\frac{1}{2\gamma^2}+\frac{1}{\bar{\gamma}(1+\bar{\gamma})} \right|\right) \\
\forall~2\leq j\leq m-2:\mathcal{D}_6 &=& B\left(2\Re(\frac{1}{\gamma^2}),2\Re(\frac{1}{\gamma^2})\right) \\
j=1:                    \mathcal{D}_7 &=& B\left(2\Re(\frac{1}{\gamma^2}),\Re(\frac{1}{\gamma^2})\right). \\
\end{eqnarray*}
The minimum and maximum of $\bigcup_{1\leq i\leq7}\mathcal{D}_i=\bigcup_{2\leq i\leq6}\mathcal{D}_i$ determine a lower
and upper bound for $h^2\sigma(-\frac{L_h+L_h^*}{2})$. \edt{For our ECS problems, ECS angles up to $\frac{\pi}{6}$ ($0<\theta_\gamma\leq\frac{\pi}{6}$) the minimum is $\Re(\frac{1}{\gamma^2})-\left|\frac{1}{2\gamma^2}+\frac{1}{\bar{\gamma}(1+\bar{\gamma})}\right|<0$ and the maximum is $\max(4,3+\frac{1}{2}\left|\frac{3+\bar{\gamma}}{1+\bar{\gamma}}\right|)$. Note that the lower bound for the real part of the eigenvalues is negative and does not exclude negative values.}\\
The matrix $-h^2\frac{L_h-L_h^*}{2}$ has five distinct Gershgorin disks
\begin{eqnarray*}
i=n-1:                  \mathcal{D}_1 &=& \edt{B\left(0,\frac{1}{2}\left|\frac{1-\bar{\gamma}}{1+\bar{\gamma}}\right|\right)} \\
i=n:                    \mathcal{D}_2 &=& \edt{B\left(2\imath\Im(\frac{1+\frac{1}{\gamma}}{1+\gamma}),\left(\frac{1}{2}\left|\frac{1-\gamma}{1+\gamma}\right|+\left|\frac{1}{2\bar{\gamma}^2}-\frac{1}{\gamma(1+\gamma)}\right|\right)\right)} \\
j=m-1:                  \mathcal{D}_3 &=& B\left(2\imath\Im(\frac{1}{\gamma^2}),\edt{-\Im(\frac{1}{\gamma^2})}+\left|\frac{1}{2\gamma^2}-\frac{1}{\bar{\gamma}(1+\bar{\gamma})} \right|\right) \\
\forall~2\leq j\leq m-2:\mathcal{D}_4 &=& B\left(2\imath\Im(\frac{1}{\gamma^2}),\edt{-2\Im(\frac{1}{\gamma^2}})\right) \\
j=1:                    \mathcal{D}_5 &=& B\left(2\imath\Im(\frac{1}{\gamma^2}),\edt{-2\Im(\frac{1}{\gamma^2}})\right). \\
\end{eqnarray*}
The minimum and maximum of $\bigcup_{1\leq i\leq5}\mathcal{D}_i=\bigcup_{\edt{1}\leq i\leq4}\mathcal{D}_i$ determine a lower
and upper bound for \edt{$-\imath h^2\sigma(-\frac{L_h-L_h^*}{2})$}. \edt{For our ECS problems, ECS angles up to $\frac{\pi}{6}$ ($0<\theta_\gamma\leq\frac{\pi}{6}$) these extrema are \edt{$4\Im(\frac{1}{\gamma^2})$} and \edt{$\frac{1}{2}\left|\frac{1-\bar{\gamma}}{1+\bar{\gamma}}\right|>0$} respectively}.
\end{proof}
%------------------
% fig: gersh
\begin{figure}[h!]
\begin{center}
\includegraphics[width=9cm]{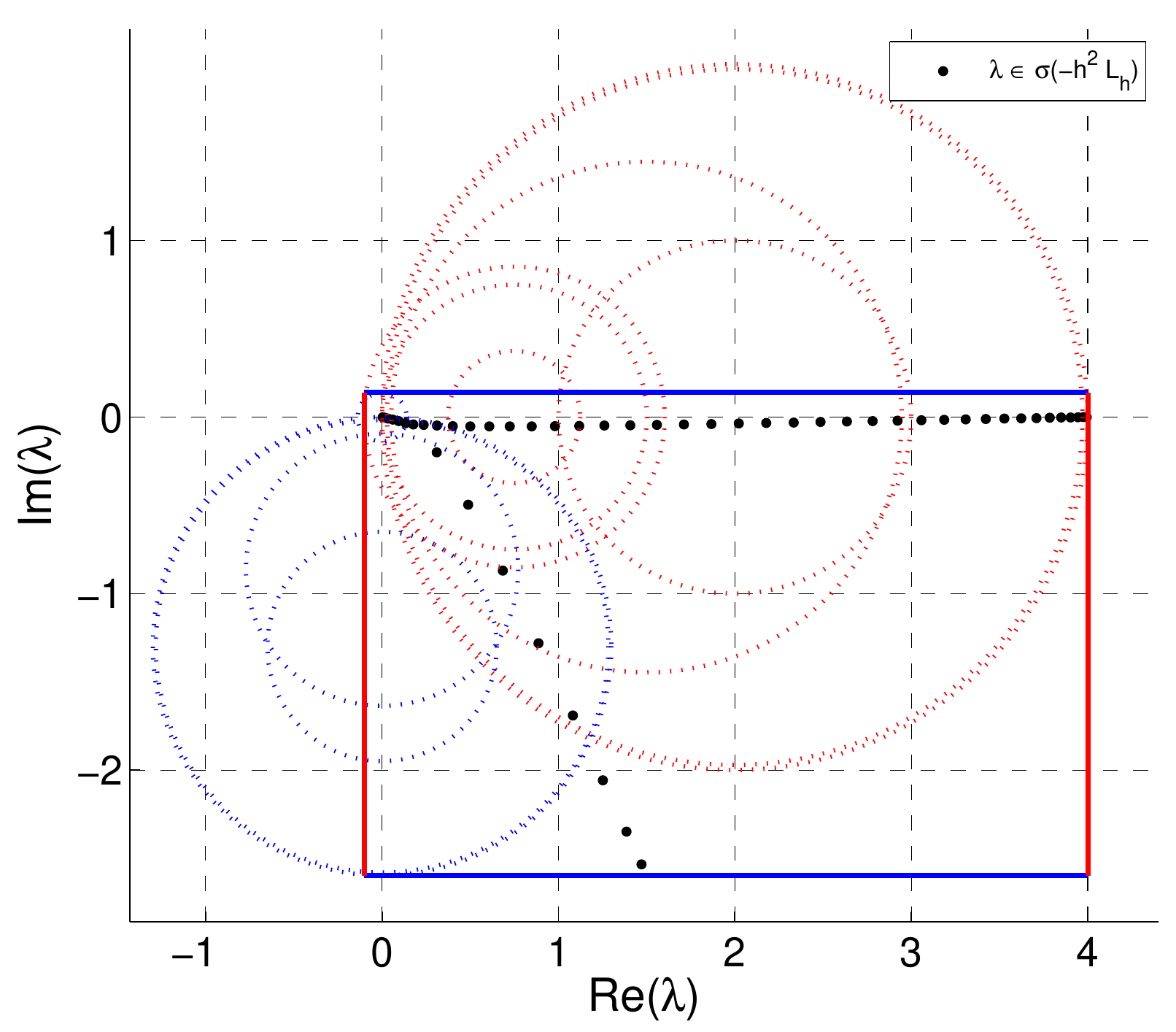}\caption{Spectrum of $-h^2L_h$ ($\bullet$) and the Gershgorin disks of $-h^2/2\left(L_h+L_h^*\right)$ (red dotted circles) and $-h^2/2\left(L_h-L_h^*\right)$ (blue dotted circles). \edt{The spectrum is bounded by the field of values that lies inside the rectangle derived from the Gershgorin disks}. The red circles lead to \edt{the} left and right bound, the blue circles give \edt{the} upper and lower bound. The negative left bound does not assure positive definiteness of the matrix. (color online)}
\label{fig:gersh}
\end{center}
\end{figure}

By considering the Gershgorin disks of only the discretized Laplacian $L_h$ these bounds can be further sharpened.
\edt{Next is a discussion on the exact position of the eigenvalues.}
%------------------
% lemma eigdis
\begin{lem}\label{lem:eigdis}
Consider the ECS grid \eqref{eq:ecsgrid} and the discretization matrix $L_h$ in Equation~\eqref{eq:lapdisecs}. Define $\gamma =
\frac{h_\gamma}{h}$. Then the eigenvalues of $-L_h$ are the solutions of
\begin{equation}\label{eq:eigcond}
F(\lambda) \equiv \frac{\tan(2n p(\lambda))}{\tan(2m q(\lambda))}+\frac{\cos(p(\lambda))}{\cos(q(\lambda))} = 0,
\end{equation}
with $p(\lambda)=\frac{1}{2}\arccos(1-\frac{\lambda}{2}h^2)$, $q(\lambda)=\frac{1}{2}\arccos(1-\frac{\lambda}{2}\gamma^2
h^2)$.
\end{lem}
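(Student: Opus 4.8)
The plan is to convert the matrix eigenvalue problem $-L_h u_h = \lambda u_h$ into a pair of constant-coefficient three-term recurrences, one on each uniform sub-grid, coupled through the single Shortley--Weller row at the interface node $z_n$. Away from $z_n$ the rows of $-L_h$ are the ordinary central-difference stencil, so for an eigenvector the components satisfy
\[
u_{j-1}+u_{j+1}=\left(2-\lambda h^2\right)u_j \quad (1\le j\le n-1), \qquad u_{j-1}+u_{j+1}=\left(2-\lambda\gamma^2 h^2\right)u_j \quad (n+1\le j\le n+m-1).
\]
The substitutions $\cos(2p)=1-\tfrac{\lambda}{2}h^2$ and $\cos(2q)=1-\tfrac{\lambda}{2}\gamma^2 h^2$ are exactly the definitions of $p(\lambda)$ and $q(\lambda)$ in the statement, and they identify $e^{\pm 2\imath p j}$ and $e^{\pm 2\imath q j}$ as the characteristic solutions on the two grids.

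First I would solve each recurrence in closed form and impose the homogeneous Dirichlet data. On the real grid $u_0=0$ forces $u_j=A\sin(2pj)$ for $0\le j\le n$, and on the complex contour $u_{n+m}=0$ forces $u_j=C\sin\left(2q(n+m-j)\right)$ for $n\le j\le n+m$, with constants $A,C\in\mathbb{C}$. (I would treat the degenerate values $\cos 2p=\pm1$ or $\cos 2q=\pm1$, where the recurrence has a repeated root, separately as limiting cases; generically they do not occur.) Requiring that the two expressions agree at the shared node $z_n$ gives the first linear relation $A\sin(2pn)=C\sin(2qm)$.

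Next I would feed these ans\"atze into the interface equation, i.e.\ the $i=n$ row of $-h^2L_h$ read off from the matrix in Lemma~\ref{lem:eigdisbound},
\[
-\frac{2}{1+\gamma}u_{n-1}+\frac{2\left(1+\frac{1}{\gamma}\right)}{1+\gamma}u_n-\frac{2}{\gamma(1+\gamma)}u_{n+1}=h^2\lambda\,u_n .
\]
Using the ans\"atze to write $u_{n-1}/u_n=\cos 2p-\cot(2pn)\sin 2p$ and $u_{n+1}/u_n=\cos 2q-\cot(2qm)\sin 2q$, and splitting the right-hand side with the two identities $\tfrac12\lambda h^2=1-\cos 2p$ and $\tfrac12\lambda\gamma^2 h^2=1-\cos 2q$, almost everything cancels and the interface equation collapses to $\cot(2pn)\sin 2p+\tfrac1\gamma\cot(2qm)\sin 2q=0$. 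Equivalently this is the vanishing of the $2\times2$ determinant of the homogeneous system in $(A,C)$, which is the compatibility condition for a nontrivial eigenvector.

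The main obstacle is the final trigonometric reduction to the exact form $F(\lambda)=0$. Rearranging the collapsed equation gives $\tan(2np)/\tan(2mq)+\gamma\,\sin 2p/\sin 2q=0$, so it remains to show $\gamma\,\sin 2p/\sin 2q=\cos p/\cos q$. This is where the half-angle structure of $p$ and $q$ is essential: from $\cos 2p=1-2\sin^2 p=1-\tfrac12\lambda h^2$ and $\cos 2q=1-2\sin^2 q=1-\tfrac12\lambda\gamma^2 h^2$ one reads off $\sin^2 q=\gamma^2\sin^2 p$, hence $\sin q=\gamma\sin p$ on the matching branch, and then $\gamma\,\sin 2p/\sin 2q=\gamma\,\sin p\cos p/(\sin q\cos q)=\cos p/\cos q$. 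Substituting this yields precisely Equation~\eqref{eq:eigcond}. The only genuine care needed is the bookkeeping of complex branches of $\arccos$, so that $\sin q=\gamma\sin p$ holds with the correct sign, together with the isolated degenerate roots noted above; the rest is routine algebra.
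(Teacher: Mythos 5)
Your proposal is correct and follows essentially the same route as the paper's own proof: both decompose the eigenvector into sine-type solutions of the two constant-coefficient recurrences (the paper phrases them as Chebyshev polynomials $V_j(x)=\sin(j\arccos x)$), impose the Dirichlet data, match at the turning point, and collapse the Shortley--Weller interface row to $\gamma\cot(2np)\sin 2p+\cot(2mq)\sin 2q=0$, after which your identity $\sin q=\gamma\sin p$ is exactly the paper's use of $1-t_\lambda=\gamma^2(1-s_\lambda)$ in reducing $-\gamma\sqrt{(1-s_\lambda^2)/(1-t_\lambda^2)}$ to $\cos p/\cos q$. Your explicit attention to branch choices and degenerate roots is slightly more careful than the paper, which only notes excluding the trivial cases $\cot(2np_\lambda)=0$ and $t_\lambda^2=1$.
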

\begin{proof}
Note that the eigenvalues of the Helmholtz problem fulfill the same condition \eqref{eq:eigcond}, with
$\lambda_{H_h} \equiv \lambda_{L_h}-k^2$. We prove the Laplace case.\\
We write the grid points slightly different \edt{than} before by numbering the grid points $(x_j)_{1\leq n}$ in $[0,1]$ from left to right, and $(y_j)_{1\leq j\leq m}$ in $[1,R_z]$ form right to left, so that the turning point $x_n=y_m=1$. In other words we consider the ECS grid as two joint grids  $(x_j)_{1\leq j\leq n}$ and $(y_j)_{1\leq j\leq m}$.
Consequently the two Dirichlet boundaries are $x_0$ and $y_0$. The Shortley-Weller finite differences formula reduces to regular
second order central differences in every grid point, except for the turning point $x_n=y_m=1$ with left grid distance
$h$ and right grid distance $\gamma h$. We look for an eigenvector $v$ on the real grid and an eigenvector $w$ on the
complex contour with the same eigenvalue $\lambda$ and $v_n=w_m$ in the turning point. We get the recurrence relations
\begin{equation*}
\left\{
  \begin{array}{ll}
    -\frac{1}{h^2}\left(v_{j-1}-2v_{j}+v_{j+1}\right) = \lambda v_j, & 0< j< n; \\
    -\frac{2}{h^2(1+\gamma)}\left(v_{n-1}-\left(1+\frac{1}{\gamma}\right)v_n+\frac{1}{\gamma}w_{m-1}\right) = \lambda v_n, & j = n; \\
    -\frac{1}{\gamma^2h^2}\left(w_{j-1}-2w_{j}+w_{j+1}\right) = \lambda w_j, & 0< j< m.
  \end{array}
\right.
\end{equation*}
The first and the last are Chebyshev recurrence relations with general solutions
\begin{equation*}
\left\{
  \begin{array}{ll}
    v_j = c_1 V_j(s_\lambda)+c_2T_j(s_\lambda), & 0< j< n; \\
    w_j = d_1 V_j(t_\lambda)+d_2T_j(t_\lambda), & 0< j< m.
  \end{array}
\right.
\end{equation*}
with $s_\lambda = 1-\frac{\lambda}{2}h^2$, $t_\lambda = 1-\frac{\lambda}{2}(\gamma h)^2$ and
$V_j(s_\lambda)=\sqrt{1-s_\lambda}U_{j-1}(s_\lambda)$. $T_j$ and $U_{j}$ are the $j$-th Chebyshev polynomials of the first
and the second kind respectively. We apply the boundary conditions $u_0=0$ and $v_0=0$ and find
\begin{equation*}
\left\{
  \begin{array}{ll}
    v_j = c_1 V_j(s_\lambda), & 0< j< n; \\
    w_j = d_1 V_j(t_\lambda), & 0< j< m.
  \end{array}
\right.
\end{equation*}
The matching condition returns
\begin{eqnarray*}
&& v_n = w_m \\
&\Leftrightarrow& c_1 V_n(s_\lambda) = d_1 V_m(t_\lambda) \\
&\Leftrightarrow& d_1 = c_1 \frac{V_n(s_\lambda)}{V_m(t_\lambda)} \\
\end{eqnarray*}
where we assumed $v_n\neq0$ so $V_n(s_\lambda)\neq0\neq V_m(t_\lambda)$. The remaining constant $c_1$ determines the
norm of the eigenvector. So we can assume $c_1=1$. Now the only free parameter left is the eigenvalue $\lambda$ that is determined by the second
recurrence relation, the discretization scheme in the turning point.
\begin{eqnarray*}
&& -\frac{2}{h^2(1+\gamma)}\left(V_{n-1}(s_\lambda)-\left(1+\frac{1}{\gamma}\right)V_n(s_\lambda)+\frac{1}{\gamma}\frac{V_n(s_\lambda)}{V_m(t_\lambda)}V_{m-1}(t_\lambda)\right) = \lambda V_n(s_\lambda)\\
&\Leftrightarrow& -\frac{2}{h^2(1+\gamma)}\left(\frac{V_{n-1}(s_\lambda)}{V_n(s_\lambda)}-\left(1+\frac{1}{\gamma}\right)+\frac{1}{\gamma}\frac{V_{m-1}(t_\lambda)}{V_{m}(t_\lambda)}\right) = \lambda\\
%&\Leftrightarrow& \frac{V_{m-1}(t_\lambda)}{V_m(t_\lambda)} +\gamma\frac{V_{n-1}(s_\lambda)}{V_n(s_\lambda)} = -\lambda\frac{h^2\gamma(1+\gamma)}{2}+\gamma+1\\
%&\Leftrightarrow& \frac{V_{m-1}(t_\lambda)}{V_m(t_\lambda)} +\gamma\frac{V_{n-1}(s_\lambda)}{V_n(s_\lambda)} = \gamma(1-\lambda\frac{h^2}{2} +(1-\lambda\frac{\gamma^2h^2}{2})\\
&\Leftrightarrow& \frac{V_{m-1}(t_\lambda)}{V_m(t_\lambda)} +\gamma\frac{V_{n-1}(s_\lambda)}{V_n(s_\lambda)} = \gamma s_\lambda +t_\lambda\\
&\Leftrightarrow& \frac{\sin((m-1)\arccos(t_\lambda))}{\sin(m\arccos(t_\lambda))} +\gamma\frac{\sin((n-1)\arccos(s_\lambda))}{\sin(n\arccos(s_\lambda))} = \gamma s_\lambda +t_\lambda\\
&\Leftrightarrow& t_\lambda-\cot(2mq_\lambda)\sqrt{1-t_\lambda^2} +\gamma\left(s_\lambda-\cot(2np_\lambda)\sqrt{1-s_\lambda^2}\right) = \gamma s_\lambda +t_\lambda\\
&\Leftrightarrow& -\cot(2mq_\lambda)\sqrt{1-t_\lambda^2} -\gamma\cot(2np_\lambda)\sqrt{1-s_\lambda^2} = 0 \\
&\Leftrightarrow& \frac{\tan(2np_\lambda)}{\tan(2mq_\lambda)} = -\gamma\sqrt{\frac{1-s_\lambda^2}{1-t_\lambda^2}} \\
&\Leftrightarrow& \frac{\tan(2np_\lambda)}{\tan(2mq_\lambda)} +\sqrt{\frac{1+s_\lambda}{1+t_\lambda}} = 0 \\
\end{eqnarray*}
\edt{We introduced the shorthands $p_\lambda =\arccos(t)$ and $q_\lambda =\arccos(s)$, substituted $V_j(x)=\sin(j \arccos(x))$ and excluded the trivial cases $\cot(2np_\lambda)=0$ and $t_\lambda^2=1$.}
\end{proof}
%----------------
We can now solve the eigenvalue problem numerically by applying e.g.\ Newton's method on the function \eqref{eq:eigcond}. There are eigenvalues to be found along the complex line $\rho e^{-2i\theta_\gamma}$ with $\rho\in\mathbb{R}$, and close to $4/h^2$ and $4/\gamma^2 h^2$.
%----------------
\begin{lem}\label{lem:pitchfork}
Let $-L_h$ be the negative discretized Laplacian in Equation~\eqref{eq:lapdisecs} with eigenvalues $\lambda\in\sigma(-L_h)$. Then three typical regions of the eigenvalues can be identified in the spectrum:
\begin{align*}
\mbox{For } |\lambda-\frac{4}{h^2}|\ll 1: \lambda\approx4 n^2\sin(l \frac{\pi}{2n}) \mbox{ with } l\lesssim n\\
\mbox{For } |\lambda-\frac{4}{h^2\gamma^2}|\ll 1: \lambda\approx4 \gamma^2 m^2\sin(l \frac{\pi}{2m})  \mbox{ with } l\lesssim m \\
\mbox{For } |\lambda|\ll 1: \lambda\approx \left(\frac{l\pi}{R_z}\right)^2  \mbox{ with } l\gtrsim 1\\
\end{align*}
These approximations are the largest eigenvalues of the discretized Laplacian restricted to the real domain $[0,1]$, the complex contour $[1,R_z]$, and the smallest eigenvalues of the discretized Laplacian on the complex line $[0,R_z]$, respectively.
\end{lem}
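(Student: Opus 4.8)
The plan is to read off the three regimes directly from the transcendental eigenvalue condition \eqref{eq:eigcond} of Lemma~\ref{lem:eigdis}, exploiting the two identities $\lambda=\frac{4}{h^2}\sin^2 p(\lambda)$ and $\lambda=\frac{4}{h_\gamma^2}\sin^2 q(\lambda)$ that follow immediately from $p=\frac12\arccos(1-\frac{\lambda}{2}h^2)$ and $q=\frac12\arccos(1-\frac{\lambda}{2}\gamma^2h^2)$ (recall $h_\gamma=\gamma h$, $nh=1$, $mh_\gamma=R_z-1$). Each of the three stated formulas is nothing but a quantization condition, namely $2np=l\pi$, $2mq=l\pi$, and $R_z\sqrt{\lambda}=l\pi$, so the whole argument reduces to showing that on each spectral window one of these conditions survives to leading order while the coupling through the turning point $z_n=1$ contributes only a subdominant correction.

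For the low-frequency window $|\lambda|\ll1$ I would Taylor-expand the arccosines: $\arccos(1-\tfrac{\lambda}{2}h^2)\approx h\sqrt{\lambda}$ yields $2np\approx nh\sqrt{\lambda}=\sqrt{\lambda}$ and $2mq\approx mh_\gamma\sqrt{\lambda}=(R_z-1)\sqrt{\lambda}$, while $\cos p,\cos q\to1$. Condition \eqref{eq:eigcond} then collapses to $\tan(\sqrt{\lambda})+\tan((R_z-1)\sqrt{\lambda})=0$, i.e.\ $\sin(R_z\sqrt{\lambda})=0$, whose roots $\sqrt{\lambda}=l\pi/R_z$ are exactly the low eigenvalues $(l\pi/R_z)^2$ of the Laplacian on the entire segment $[0,R_z]$, matching the third case.

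For the two high-frequency windows it is cleaner to work from the symmetric intermediate identity $\cot(2mq)\sin(2q)+\gamma\cot(2np)\sin(2p)=0$ that appears in the proof of Lemma~\ref{lem:eigdis}. Near $\lambda=4/h^2$ one has $p\to\pi/2$, so $\sin(2p)\to0$ and the second term is negligible except where $\cot(2np)$ is simultaneously near a pole; writing $p=\tfrac{\pi}{2}-\delta$ and $2n\delta=k\pi+\eta$ shows $\eta=O(1/n)$, so the roots sit at $2np\approx l\pi$ with $l=n-k\lesssim n$, giving $\lambda\approx\frac{4}{h^2}\sin^2(l\pi/2n)$, the top eigenvalues of the discretized Laplacian on $[0,1]$. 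The window near $\lambda=4/h_\gamma^2$ is the mirror image with $(n,p)$ and $(m,q)$ interchanged: here $\cot(2mq)$ supplies the pole and $\sin(2q)$ the simple zero, producing $\lambda\approx\frac{4}{h_\gamma^2}\sin^2(l\pi/2m)$ with $l\lesssim m$, the top eigenvalues on the contour $[1,R_z]$.

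The main obstacle is not the leading balance but the bookkeeping that makes ``approximately'' precise: I must verify that the neglected term really is $O(1/n)$ (resp.\ $O(1/m)$) relative to the dominant one on each window, which requires checking that the companion factor staying finite there, i.e.\ that $\gamma\cot(2np)\sin(2p)$ (first window) and its analogue (second window) are bounded away from $0$ and $\infty$. Because $\gamma$ is complex the analysis is genuinely complex-analytic: $p$, $q$ and $\lambda$ leave the real axis, the $\arccos$ must be evaluated on a fixed branch, and I would have to confirm that the three windows cover the spectrum without overlapping. Reconciling the surviving quantization conditions with the eigenvalues of the three reduced Dirichlet problems is then immediate once $2np=l\pi$, $2mq=l\pi$ and $R_z\sqrt{\lambda}=l\pi$ are established.
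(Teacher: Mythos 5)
Your proposal is correct and follows essentially the same route as the paper: both start from the transcendental condition of Lemma~\ref{lem:eigdis}, Taylor-expand $p(\lambda)$ and $q(\lambda)$ in the three spectral windows, and identify the surviving quantization conditions ($2np\approx l\pi$, $2mq\approx l\pi$, $R_z\sqrt{\lambda}\approx l\pi$) with the eigenvalues of the three reduced Dirichlet problems. The only real difference is one of direction---the paper substitutes the reduced-problem eigenvalues $\mu_l$ into the product form $F(\mu)$ and checks $F(\mu_l)\approx 0$, whereas you locate the roots by a pole--zero balance in the cotangent form of the condition---and your version, like the paper's, leaves the uniform bookkeeping (branch of $\arccos$ for complex $\gamma$, subdominance of the neglected terms) as an acknowledged but unverified step.
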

\begin{proof}
We take the scaled operator $-h^2 L_h$. The eigenvalues are the roots of the function
\begin{align}
F(\mu) &= \sin\left(2np(\mu)\right)\cos\left(2mq(\mu)\right)\cos\left(q(\mu)\right) \\ \nonumber
 &+\cos\left(2np(\mu)\right)\sin\left(2mq(\mu)\right)\cos\left(p(\mu)\right) \label{eq:F}
\end{align}
with $p(\mu)=\frac{1}{2}\arccos(1-\frac{\mu}{2})$, $q(\mu)=\frac{1}{2}\arccos(1-\frac{\mu}{2}\gamma^2)$.
Using Taylor series we get
\begin{align*}
 p(\mu) &= \frac{\pi}{2}-\frac{1}{2}\sqrt{4-\mu} +\mathcal{O}(|\edt{4-\mu}|^{3/2})
\end{align*}
for $\mu\approx 4$, so $F(\mu)$ can be approximated by
\begin{align*}
F(\mu) &\approx \sin\left(n\left(\pi-\sqrt{4-\mu}\right)\right)\cos\left(2mq(\mu)\right)\cos\left(q(\mu)\right)\\
    &+\cos\left(n\left(\pi-\sqrt{4-\mu}\right)\right)\sin\left(2mq(\mu)\right)\sin\left(\frac{1}{2}\sqrt{4-\mu}\right)
\end{align*}
for $|\mu-4| \ll 1$. This can be simplified even more to
\begin{align}
F(\mu) &\approx \sin\left(n\left(\pi-\sqrt{4-\mu}\right)\right)\cos\left(2mq(\mu)\right)\cos\left(q(\mu)\right)\\ \nonumber
    &+\cos\left(n\left(\pi-\sqrt{4-\mu}\right)\right)\sin\left(2mq(\mu)\right)\left(\frac{1}{2}\sqrt{4-\mu}\right),\label{eq:Fsimp1}
\end{align}
where we used the series
\begin{align*}
\sin\left(\frac{1}{2}\sqrt{4-\mu}\right) &= \frac{1}{2}\sqrt{4-\mu} +\mathcal{O}(|4-\mu|^{3/2})
\end{align*}
with $|\mu-4| \ll 1$. Define $\mu_l=4-\frac{\pi^2}{n^2}\left(l-n\right)^2$, then
\begin{align*}
F(\mu_l) &\approx 0 \pm\sin\left(2mq(\mu_l)\right)\left(\frac{1}{2}\sqrt{4-\mu_l}\right) \\
    & \approx 0
\end{align*}
for $l\approx n$.
The eigenvalues of the discretized Laplacian with Dirichlet boundary conditions, defined on the real domain $[0,1]$, are $\lambda_l = 4\sin^2(\frac{l\pi}{n})$ with $1\leq l\leq n-1$. For $l\approx n$ we have
\begin{align*}
\lambda_l &= 4-\frac{\pi^2}{n^2}\left(l-n\right)^2 +\mathcal{O}((l-n)^3) \\
    &= \mu_l +\mathcal{O}((l-n)^3).
\end{align*}
So we found that the eigenvalues of $-h^2 L_h$ in the neighborhood of $4$ can be approximated by the eigenvalues of the Laplacian on the real part of the domain. In the same way we can show that the eigenvalues in the neighborhood of $\frac{4}{\gamma^2}$ can be approximated by the eigenvalues of the Laplacian defined on the complex contour, by approximating
\edt{\begin{align*}
q(\mu) &= \frac{\pi}{2}-\frac{1}{2}\sqrt{4-\gamma^2\mu} +\mathcal{O}(|\edt{4-\gamma^2\mu}|^{3/2}).
\end{align*}}
Now we are looking for the smallest eigenvalues of $-h^2 L_h$. Using Taylor series we get
\begin{align*}
p(\mu) &= \frac{\sqrt{\mu}}{2} +\mathcal{O}(|\mu|^{3/2}) \\
q(\mu) &= \gamma\frac{\sqrt{\mu}}{2} +\mathcal{O}(|\mu|^{3/2})
\end{align*}
so $F(\mu)$ can be approximated by
\begin{align*}
F(\mu) &\approx \sin\left(n\sqrt{\mu}\right)\cos\left(m \gamma\sqrt{\mu}\right)\cos\left(\frac{\sqrt{\mu}\gamma}{2}\right)\\
    & +\cos\left(n\sqrt{\mu}\right)\sin\left(m\gamma\sqrt{\mu}\right)\cos\left(\frac{\sqrt{\mu}}{2}\right)
\end{align*}
for $\mu \ll 1$. We write $\gamma = 1+\imath\varepsilon$ with $0<\varepsilon<1$. This is true for \edt{ECS} with an angle $0<\theta_\gamma<\frac{\pi}{4}$. Then the function can be simplified even more to
\begin{align}
F(\mu) &\approx \sin\left(n\sqrt{\mu}+m\gamma\sqrt{\mu}\right)\cos(\frac{\sqrt{\mu}}{2})
\end{align}\label{eq:Fsimp3}
where we used the series
\begin{align*}
\cos\left(\gamma\frac{\sqrt{\mu}}{2}\right) &= \cos\left(\frac{\sqrt{\mu}}{2}\right) +\mathcal{O}(|\mu\varepsilon|)
\end{align*}
The eigenvalues $\lambda_l = \left(\frac{l \edt{\pi}}{n + m\gamma}\right)^2$ of the scaled continuous operator $-h^2 L$, with $l\in\mathbb{N}$, are roots of the simplified function \eqref{eq:Fsimp3}. So the smallest eigenvalues of \eqref{eq:lapdisecs} can be approximated by $\lambda_l\ll1$.
\end{proof}
%------------------
For the Laplace problem \eqref{eq:lapdisecs} the spectrum has a typical pitchfork shape. There is a clear complex
branch associated to eigenvectors located on the complex contour, and a branch closer to the real axis that corresponds
to eigenvectors located on the real domain. The smallest eigenvalues in the tail of the pitchfork belong to the
smoothest eigenvectors spread over the entire ECS domain. They lie close to the smallest eigenvalues of the continuous
ECS operator $-L$ (Figure \ref{fig:pitchfork}). Indeed, define the complex mesh width $h_\alpha = R_z/(n+m+1)$, belonging to a straight complex grid connecting $0$ and $R_z$, and $\alpha = h_\alpha/h$.
Then we conjecture for the discretized Laplacian $-L_h$ in \eqref{eq:lapdisecs}
$$\sigma(-L_h)\subset (\mathfrak{S}\cup\mathfrak{T})$$
where $\mathfrak{S}$ is a strip around the complex line $4\rho/(\alpha h)^2$  $(0\leq\rho\leq\rho_0<1)$ and
$\mathfrak{T}$ is the interior of the triangle $\widehat{\mu_1\mu_2\mu_3}\subset\mathbb{C}$, with
$\mu_1 = 4\rho_0 /(\alpha h)^2$, $\mu_2 = 4/h^2$ and $\mu_3 = 4/(\gamma h)^2$. We liberally use the terms, \emph{pitchfork} and \emph{tail of the pitchfork} to represent the triangular region $\mathfrak{T}$ and the line segment $\mathfrak{S}$ respectively. \edt{For the Helmholtz operator with a constant wave number $k$ the pitchfork is shifted in the negative real direction over a distance $k^2$.}
% fig: pitchfork
\begin{figure}[h!]
\begin{center}
\includegraphics[width=9cm]{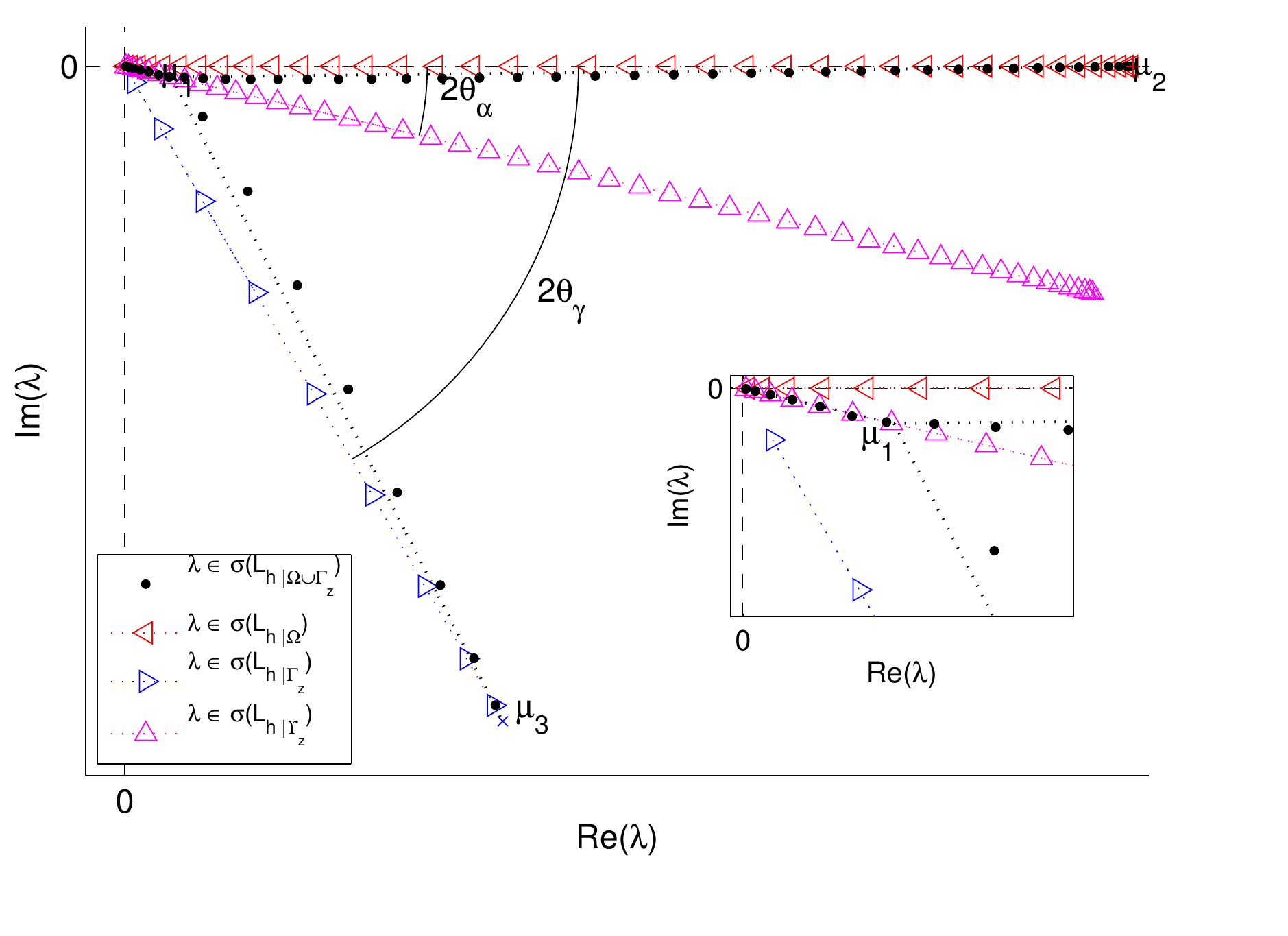}\caption{An illustration of the result in Lemma~\ref{lem:pitchfork}. The eigenvalues of the ECS Laplacian discretization matrix ($\bullet$) lie along a pitchfork shape figure, close to the eigenvalues of the same \edt{Laplace} problem restricted to the interior real domain ($\triangleleft$) and the complex contour ($\triangleright$) respectively. In the detail view of the area around the origin we observe that the smallest eigenvalues practically agree with the smallest eigenvalues of the Laplace problem defined on the complex line $[0,R_z]$ ($\triangle$). The result is a pitchfork shape: the smallest eigenvalues are aligned until they split up into two branches in the point $\mu_1 = 4\rho_0/(\alpha h)^2$, with limiting points $\mu_2 = 4/h^2$ ($+$) and $\mu_3 = 4/(\gamma h)^2$ ($\times$). (color online)}
\label{fig:pitchfork}
\end{center}
\end{figure}

\edt{
\begin{rem}[Spectrum of smoother ECS transformations]
In our analysis we have focused on problems on a domain that is complex stretched by the ECS transformation in \eqref{eq:lapctuecs}. This leads, in 1D, to a tridiagonal matrix which is constant in the interior and constant in the boundary layer. Both regions are connected by a single condition which is the finite difference approximation of the equation at the turning point. For other complex stretching transformations like quadratic or polynomial scaling (see e.g.\ Figure~\ref{fig:smoothECS}) the discretization matrix is no longer constant and it is much harder to derive theoretical results for the eigenvalues. Numerical experiments, however, show a very similar eigenvalue spectrum with a pitchfork. Again, here are some numerical eigenvalues, corresponding to smooth modes, that approximated the analytical result, $\left(j\pi /R_z\right)^2$. At the pitchfork, the spectrum breaks again into two branches. One branch belongs to eigenmodes that are mainly located in the boundary layer and these modes have eigenvalues with a large imaginary part. The other branch corresponds to states that are located on the real part of the grid and the eigenvalues will lie close to the real axis.
\end{rem}
}
\begin{figure}
\begin{center}
\includegraphics[width=13cm,height=6.5cm]{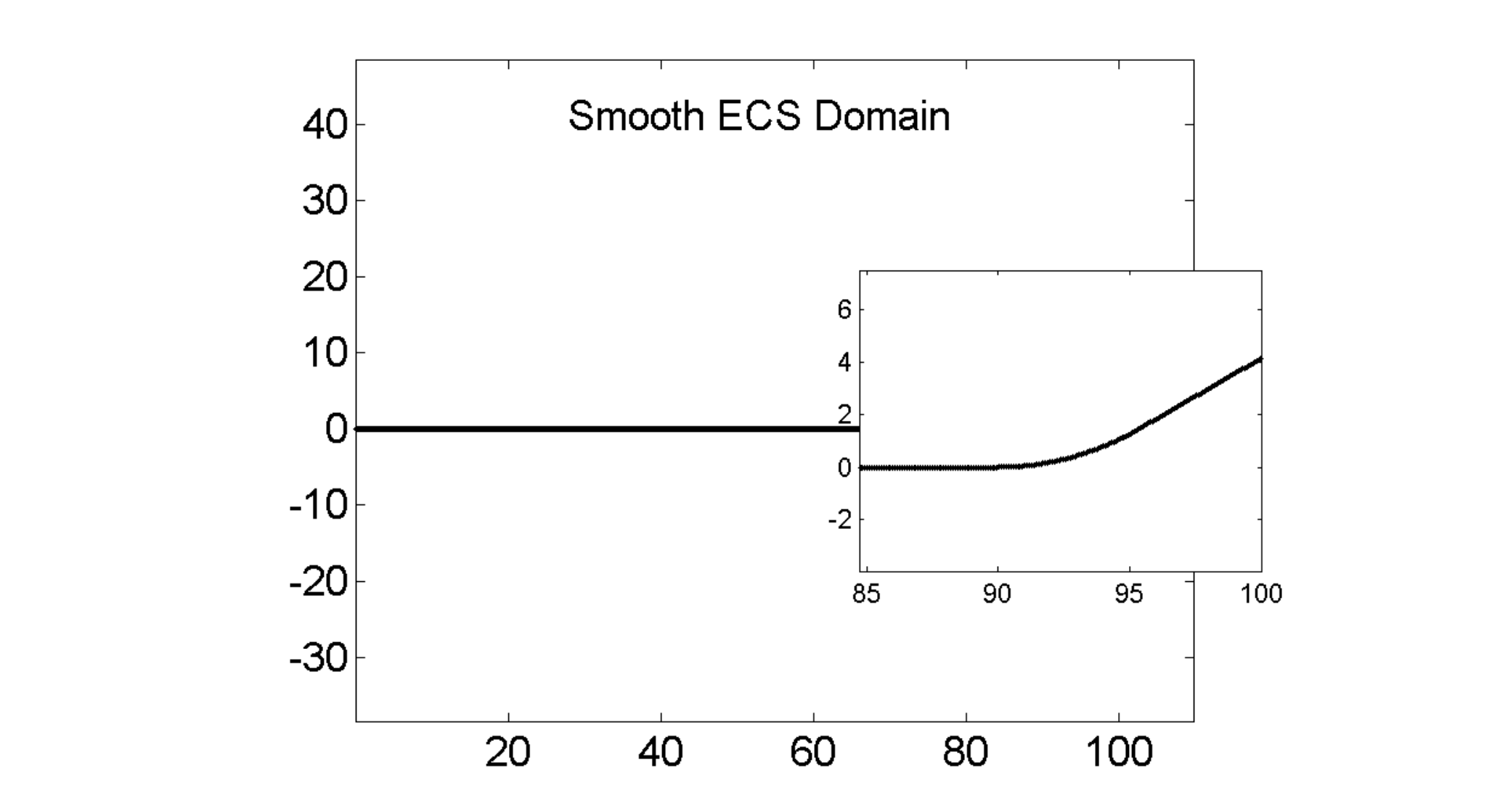}
	\caption{A one-dimensional domain with smoother ECS transition in order to preserve the order of discretization. The turning point is magnified.}\label{fig:smoothECS}
\end{center}
\end{figure}

\section{The Model problems} \label{sec:MP}
%--------------------------------------
% experiments: model problems
%--------------------------------------
The actual discrete problems that we solve in the section with numerical experiments, Section \ref{sec:experiments}, are derived from three model problems \edt{that are representative for break-up problems as they appear in physical systems.} The discrete formulation is constructed both with the first order Sommerfeld radiation boundary conditions as well as with the ECS \edt{layers}. Therefore, the boundary conditions are not part of the nomenclature. E.g., MP1 refers to Model Problem 1 and does not take into account the boundary conditions, which would be explicitly mentioned. Likewise MP2 and MP3. Collectively, these model problems are given by the Helmholtz equation,
\begin{align} \label{eq:Helgen}
 - \{\Delta + \phi(x,y)\}u(x,y) = \chi(x,y); \qquad (x,y) \in (0,r)^2
\end{align}
and are distinguished by the concrete form of the \edt{space dependent} wave number $\phi$, the right hand side $\chi$, and the domain size $r$.

For a Helmholtz equation on a unit square domain with a wave number $\phi(x,y) = k\edt{^2}$, an accuracy condition that guards against phase errors \emph{polluting} the computations \cite{BGT85,IB95}, requires bounding \edt{$k^3 h^2$} by a small constant. A similar \edt{but less} strict constraint that ensures using at least $10$ points per wavelength of the solution translates to $kh<0.625$ \cite{EOV06}. \edt{Since we plan to observe solely iterative behavior, we will stick to the latter relaxed condition in our experiments.}

\subsection{Model Problem 1 (MP1)}
MP1 is the same model problem that forms the basis for the results that appeared in \cite{EVO06}. It is characterized by a point source in the center of the domain embodied by a Dirac delta right hand side. In the discrete version of the problem, the right hand side is non-zero ($=1$) for only one computational node in the scheme, which lies at the center of the domain. The wave number is constant in MP1, and the domain is a unit square.
\begin{align} \label{eq:MP1}
\text{Specification of MP1:}\\
\phi(x,y) &= k^2, & k \in \mathbb{R}  \qquad(\text{constant wave number}) \notag \\
\chi(x,y) &= \delta(x,y) & \delta(x,y) \qquad(\text{Dirac's delta function}) \notag \\
r &= 1. \notag
\end{align}
A plot of MP1 with both type of boundary treatments is given in Figure \ref{fig:MP1}.
% fig: plotMP1 and plotMP1SOM
\begin{figure}[!h]
 \subfigure[MP1 with \edt{ECS layers} ($\theta_\gamma = \frac{\pi}{6}$)]{\includegraphics[width=8cm]{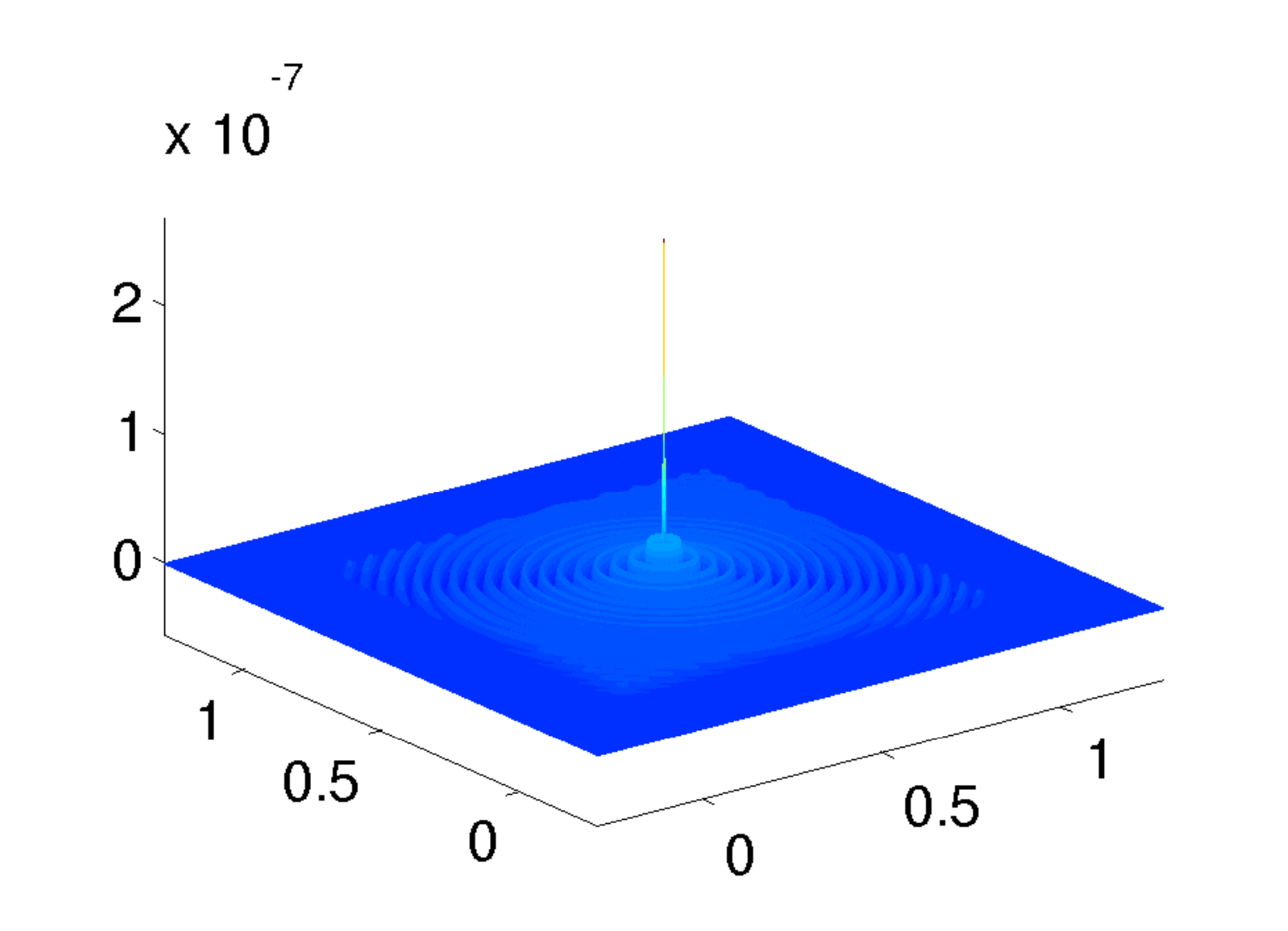}}
 \subfigure[MP1 with Sommerfeld BC]{\includegraphics[width=8cm]{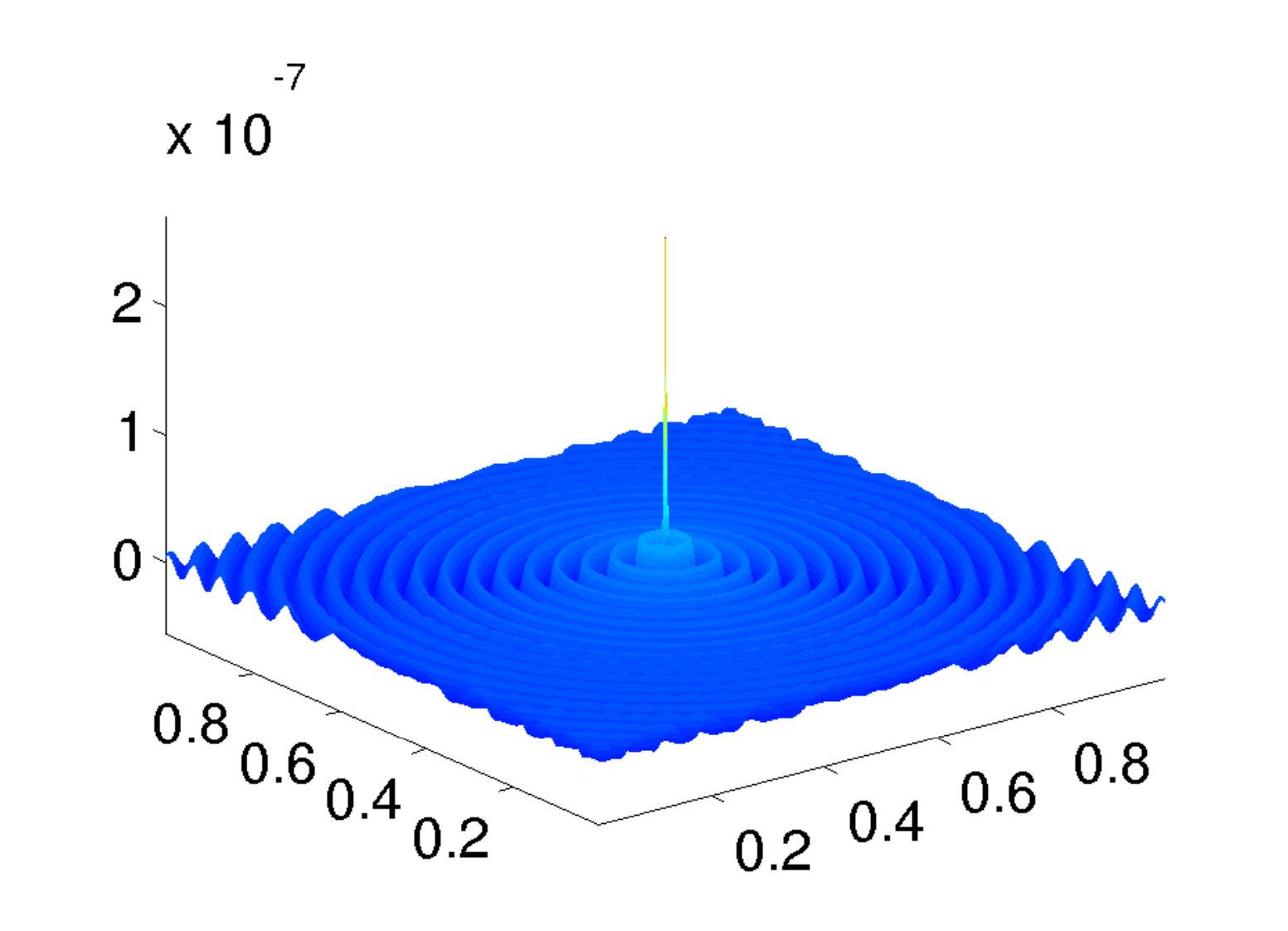}}
\caption{The real part of the solution of MP1 with $k=160$, discretized on a grid having $256$ interior points per dimension. This suffices to meet the accuracy condition. (color online)}
\label{fig:MP1}
\end{figure}

\subsection{Model Problems 2 and 3, (MP2, MP3)}
MP2 and MP3 are Helmholtz model problems with strongly varying wave numbers, and therefore pose a tough benchmark for an iterative approach. These problems originate from large scale Helmholtz problems that appear in the simulation of Schr\"odinger's equation for single and multiple ionization of atoms and molecules \cite{VMRM05}. The dynamics of two \edt{positively charged} electrons $\mathbf{r}_1$ and $\mathbf{r}_2$ in the field of \edt{the  negatively charged nuclei} need to be modeled and solved. \edt{Usually the electrons move, due to their mass difference, much faster than the nuclei and the latter are usually taken fixed in space.} This leads to a Schr\"odinger equation with a six-dimensional wave function $\psi(\mathbf{r}_1,\mathbf{r}_2)$\edt{, which is often} expanded in spherical coordinates about the center of the molecule \edt{with the $z$ axis along the axis of the molecule. The expansion is}
\begin{equation*}
\sum_{l_1m_1,l_2m_2} \psi_{l_1m_1,l_2m_2}(\rho_1,\rho_2) Y_{l_1m_1}(\edt{\Omega_1})Y_{l_2m_2}(\edt{\Omega_2}),
\end{equation*}
where $(\rho_1,\edt{\Omega_1})$ and $(\rho_2,\edt{\Omega_2})$ are the spherical coordinates of the first and the second electron \edt{and $\Omega_i$ denotes the two angles in the spherical coordinates}. This expansion leads to a very large number of coupled 2D problems, where $\psi_{l_1m_1,l_2m_2}(\rho_1,\rho_2)$ is the solution of the 2D problem for particular \edt{integer values of} $l_1$, $m_1$, $l_2$ and $m_2$. It describes the wave as a function of the distances $\rho_1$ and $\rho_2$ of both electrons \edt{to the center of the molecule}. The coupled \edt{equation} has a block structure and the differential operators only appear in the diagonal blocks \edt{since $Y_{lm}(\Omega)$ are eigenfunctions of the angular part of the Laplacian operator in spherical coordinates.}

In the work \cite{VMRM05}, the resulting linear systems for a molecule are solved iteratively. The problem was preconditioned by
inverting the diagonal blocks with the direct sparse solver SuperLU, which is based on the left-looking supernodal method \cite{D06} and was employed on a massively parallel computer. The current work aims to replace this direct solver with an iterative alternate based on preconditioning. The main motivation for studying the numerical properties of the solver for the model problems is that the approach in \cite{VMRM05} can not be used for solving systems with three or more particles. The diagonal blocks that need to be inverted in this extended case are at least 3D problems, and the resulting storage and computational complexity grows out of reach for the current computational infrastructures.

\begin{align} \label{eq:MP2}
\text{Specification of MP2:}\\
\phi(x,y) &= \nu\left(\frac{1}{e^{x^2}} + \frac{1}{e^{y^2}}\right) + k^2, & 0<k<5, \quad 0<\nu<10 \notag \\
\chi(x,y) &= \frac{1}{e^{x^2+y^2}} \notag \\
r &= 50. \notag
\end{align}

\begin{align} \label{eq:MP3}
\text{Specification of MP3:}\\
\phi(x,y) &= \frac{1}{x} + \frac{1}{y} + k^2, & \hspace{1.5cm}0<k<5 \hspace{1.5cm}\notag \\
\chi(x,y) &= \frac{1}{e^{x^2+y^2}} \notag \\
50<& r<200 \notag
\end{align}
These model problems are representative of the 2D problem that appear when $l_1=0$ and $l_2=0$. The coordinates $x$ and $y$ should be interpreted as radial variables $\rho_1$ and $\rho_2$.

Homogeneous Dirichlet boundary conditions stay fixed at the south and the west edges of the domain where $\rho_1$ and $\rho_2$ are
zero. On the east and the north edges where $\rho_1$ or $\rho_2$ are large, absorbing boundary conditions have to be used. We therefore toggle between the first order Sommerfeld BC and the \edt{ECS layers} on these two edges, and provide numerical results with both. \edt{For quality calculations that reproduce the physical experiments, higher order absorbing boundary conditions need to be used, but for the purposes of the current paper these low order boundary conditions are sufficient.} The boundary conditions that MP2 and MP3 employ are given by Equation~\eqref{eq:BC}.
\begin{align} \label{eq:BC}
&u(0,y) = u(x,0) = 0 \quad\text{Homogeneous Dirichlet BC, south/west edges}\\
&\edt{\begin{cases}
\frac{\partial u}{\partial \hat{n}} & = -\imath k u \quad \text{Sommerfeld BC, east/north edges}  \\
\text{or} &\\
u &= 0  \quad\text{ECS layers, east/north edges}
\end{cases}}
\end{align}

For $\nu = 7$ and $k=2$, a plot of the solution of MP2 appears in Figure \ref{fig:MP2}. In this particular case, the minimum grid size required for an acceptable resolution of the solution is $341^2$, closest to which the most convenient practical grid size is $384^2$ from a multigrid perspective. Later in Section \ref{subsec:numex}, we describe how we evaluate the minimum grid size for MP2 and MP3. The solution has evanescent waves for values of $\nu > 2.73$, which are damped exponentially on these
edges. \edt{These evanescent waves correspond to single ionization break-up reactions \cite{RBIM99}.}

In MP3, $\phi(x,y)$ has a singularity at the origin. Unlike MP2, the solution of MP3 always has evanescent waves at the south/west edges, regardless of the choice of the parameters $r$ and $k$. For $r=90$, and $k=2$, the minimum interior grid size required is around $1024^2$ and the solution is depicted in Figure \ref{fig:MP3}.

% fig: plotMP2 and plotMP2SOM
\begin{figure}[!h]
\subfigure[MP2 with \edt{ECS layers} ($\theta_\gamma = \frac{\pi}{6}$)]{\includegraphics[width=8cm]{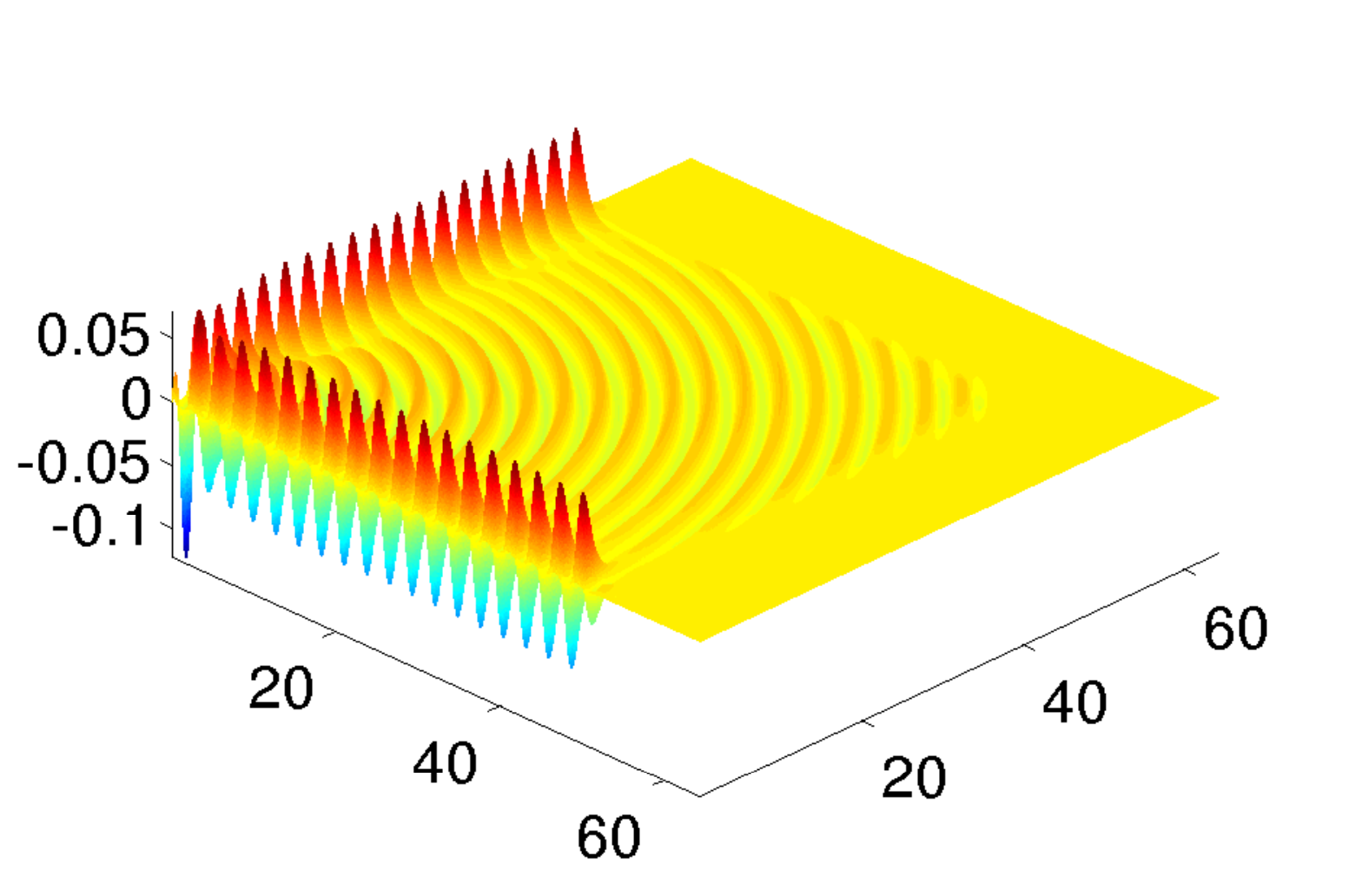}}
\subfigure[MP2 with Sommerfeld BC]{\includegraphics[width=8cm]{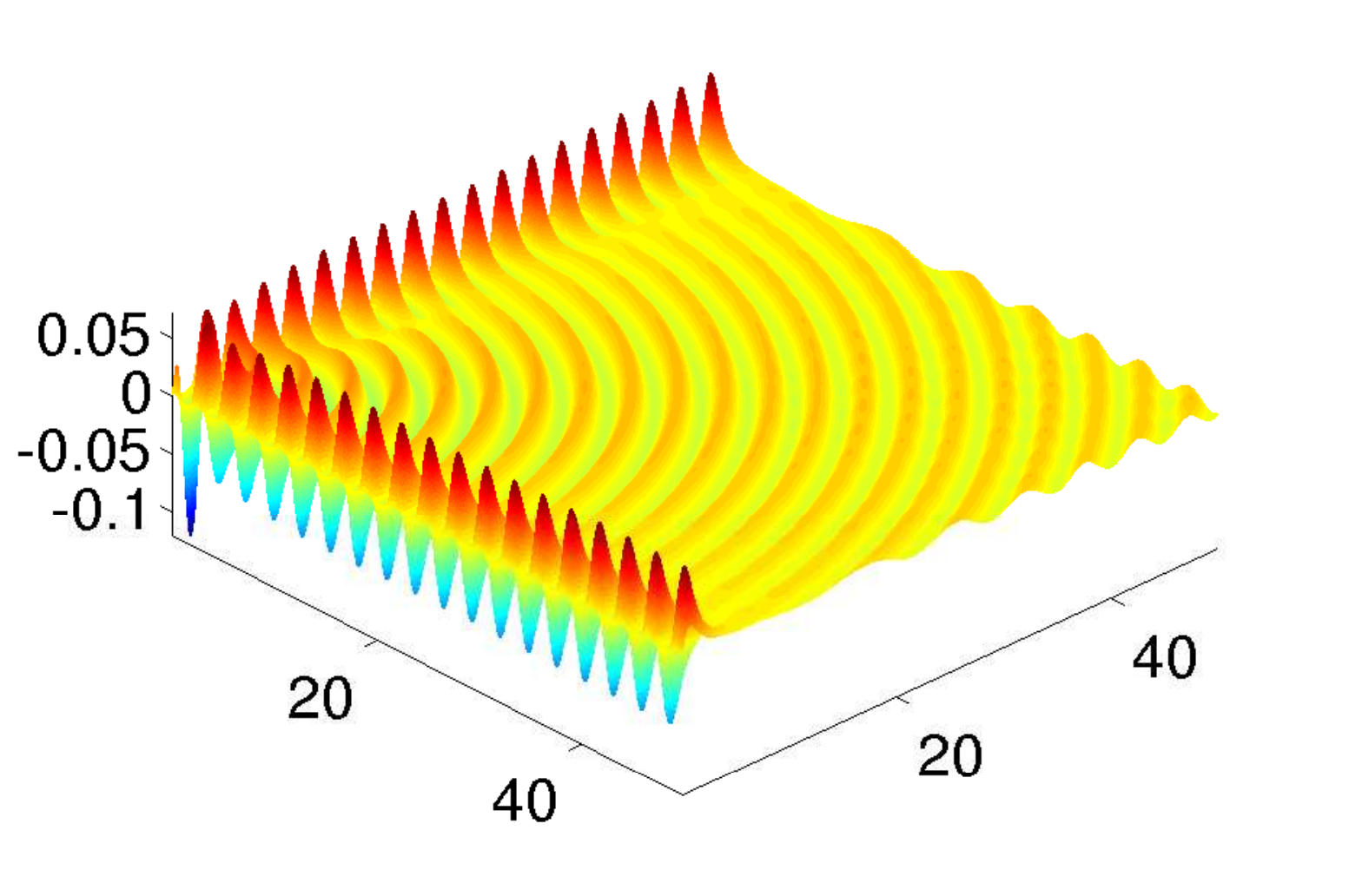}}
\caption{The real part of the solution of MP2, with $\nu = 7$ and $k=2$. For values of $\nu>2.73$, evanescent waves form near Dirichlet edges. (color online)} \label{fig:MP2}
\end{figure}

% fig: MP3ECS and MP3SOM
\begin{figure}
\subfigure[MP3 with \edt{ECS layers} ($\theta_\gamma = \frac{\pi}{6}$)]{\includegraphics[width=8cm]{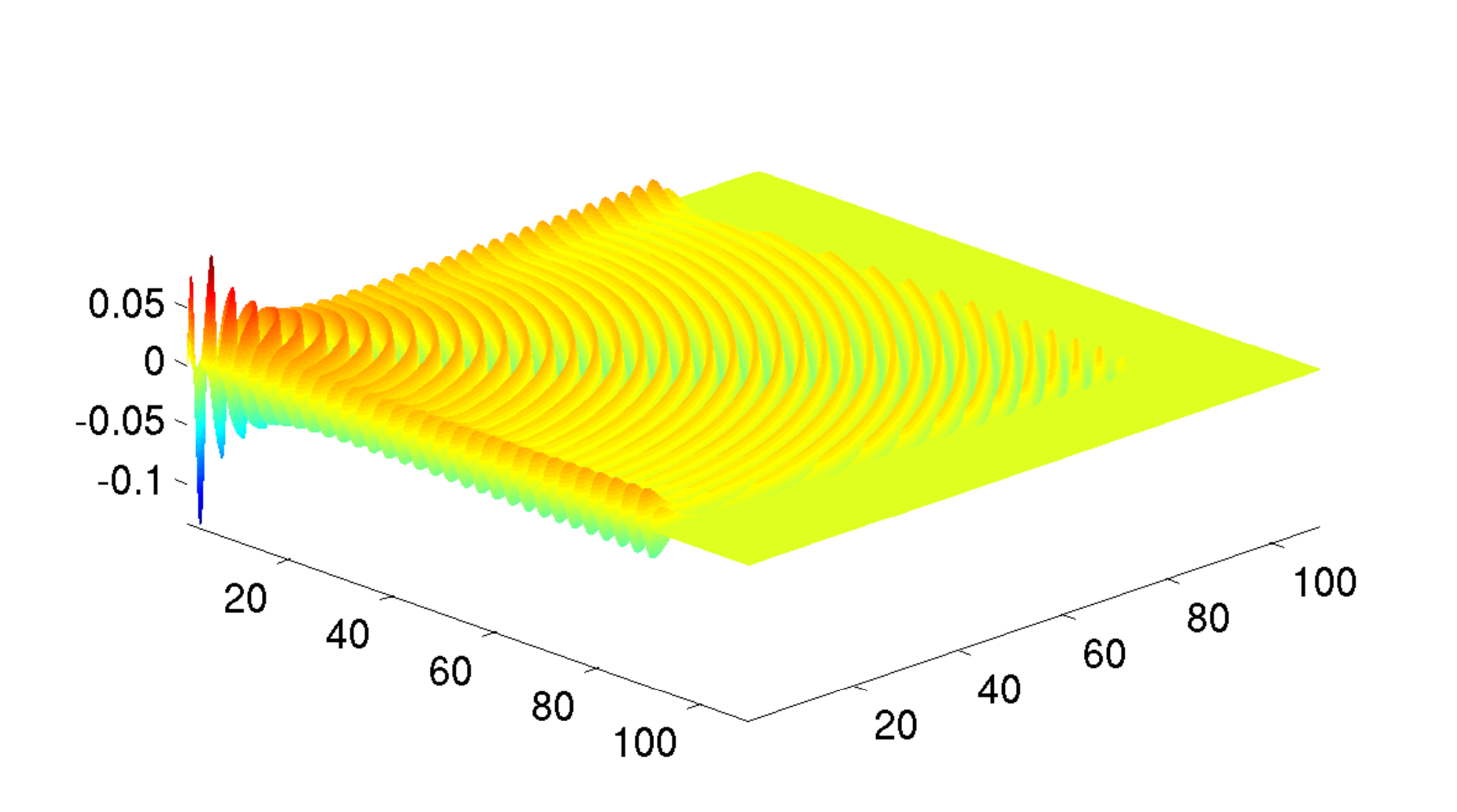}}
\subfigure[MP3 with Sommerfeld BC]{\includegraphics[width=8cm]{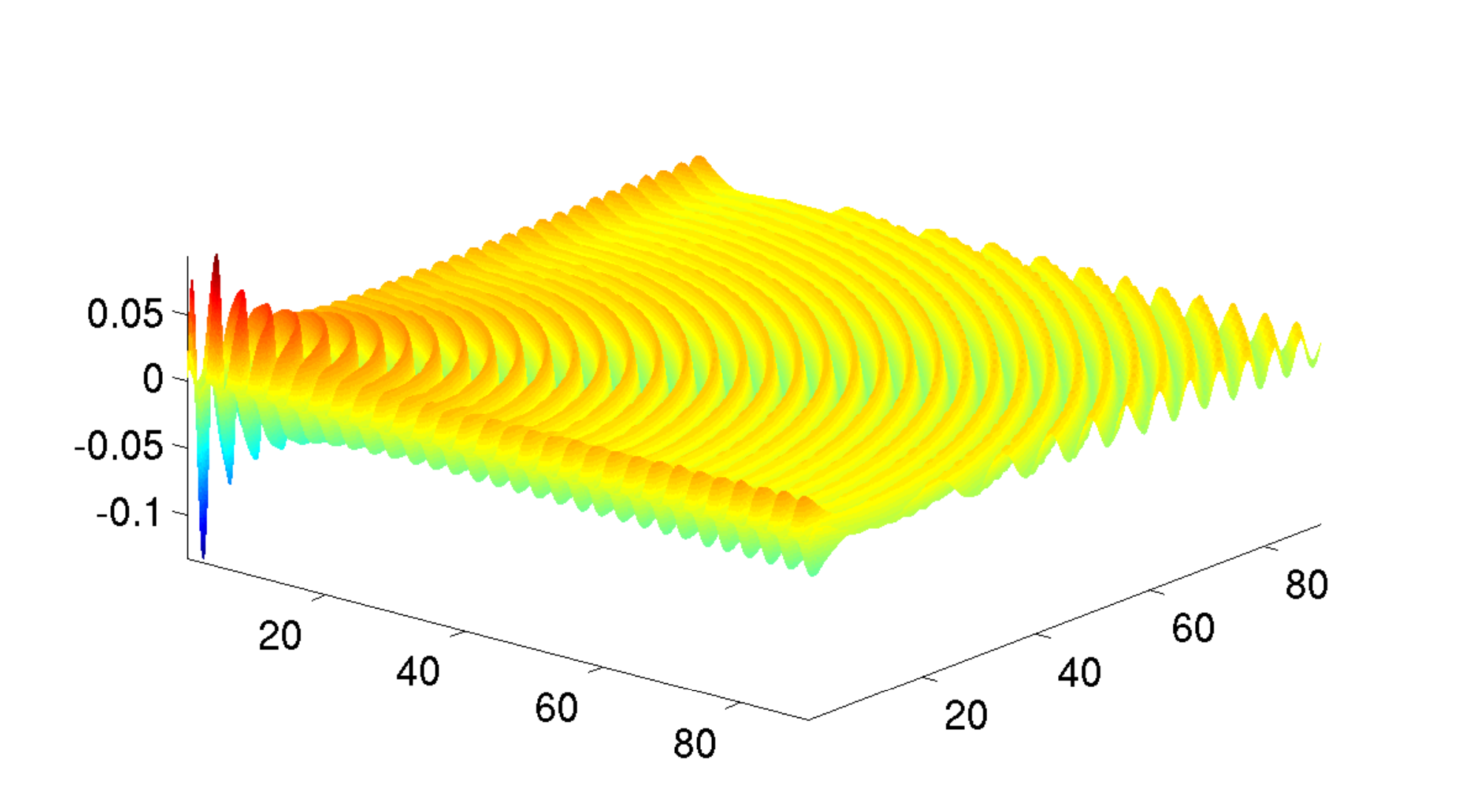}}
\caption{The \edt{real part of the} solution of MP3, with $r=90$ and $k=2$. Evanescent waves are visible near Dirichlet boundaries. The
wavelength is quite small and implies a huge grid size to comply with accuracy requirements. (color online)} \label{fig:MP3}
\end{figure}

This completes the description of the model problems, which we solve iteratively in Section \ref{subsec:numex}.

\section{Multigrid and preconditioning} \label{sec:prec}
%--------------------------------------
% Intro on CSL and CSG
%--------------------------------------
In this section we discuss the effect of the spectral properties of the indefinite Helmholtz operator on the convergence of iterative processes such as multigrid. The spectrum $\lambda_{H_h}\in\sigma(H_h)$ of the discrete Helmholtz operator and that of the discrete Laplacian on an ECS grid, i.e., $\lambda_{L_h}\in\sigma(L_h)$, varies only up to a real constant $-k^2$, which defines the distance by which an eigenvalue $\lambda_{L_h}$ shifts westwards to render $\lambda_{H_h}$. We saw in Section~\ref{sec:NAdisc} that $\lambda_{H_h}$ is constrained to an area consisting of a straight line segment, and a region bounded by a triangle, the so-called pitchfork. We observe that for small values of $hk$, \edt{the smallest eigenvalues} lie on the tail of the pitchfork. The eigenmodes corresponding to these small eigenvalues are the standing waves that cover \edt{both} the real and the complex part of the domain. It is important to note that none of the small eigenvalues can be exactly equal to zero because we have shown that each of the corresponding modes must possess at least a non-zero imaginary part. Depending on the magnitude of the real wave number, this can potentially lead to a very large (but bounded) condition number and thus confirms that the discrete problem is ill-conditioned.

\subsection{Multigrid}
In this section, we assume familiarity with basic geometric multigrid. See \cite{Briggs00, Wess92, Trot01} for a quick
access. Here, we briefly skim through the multigrid difficulties in solving an indefinite Helmholtz problem.

The first aspect of multigrid that requires attention in the context of indefinite linear systems, is the absence of a
pointwise smoothing procedure. For a given discrete operator $M_h$, a strict condition on the so-called
\textit{h-ellipticity} measure $E_h(M_h)$ \cite{Brandt80, Trot01}, viz, $E_h(M_h) > 0$ formally implies the existence
of a pointwise smoothing process. Circumventing the details, it suffices to mention here that the $h$-ellipticity of the
discretized indefinite Helmholtz operator is very close to zero for interesting values of the wave number, and
therefore, common stationary methods do not amply relax the error to be representable on the coarse grid.

The other troublesome multigrid aspect that merits attention is \edt{coarse grid} correction. To see this, imagine that we increase \edt{the mesh width while keeping $k$ constant. The rightmost eigenvalues of the discrete operator on the finest level lies near $4/h^2-k^2$ and cannot come near zero without strictly violating the posted accuracy condition $hk < 0.625$}. However, this is more likely to happen within a multigrid cycle where the same operator is re-discretized on the coarser grids, each with a larger mesh width. This effect can lead to resonant behavior \edt{on a level where $4/h^2-k^2 \approx 0$}. This leads to a severe degradation of multigrid performance. This issue is also well-known and discussed in papers, such as \cite{EEL01, EOV06}.

An important point to observe in this context, is that smoothing is much less of a trouble than coarse grid correction in the case of a multigrid solution of an indefinite problem. For moderate wave numbers in MP1 such as $k=40$, we see that multigrid still converges after a careful choice of components. For example, we conducted a test on this problem with $k=40$ and interior grid size $N=64$ \edt{in both dimensions}; with \edt{ECS layers} on all edges. The components were ILU(0) with $\omega=0.3$, F(1,2)-cycle, FW-restriction, bilinear prolongation and Galerkin formulation of the coarse grid operator. For this problem multigrid converged in $21$ iterations. However, we also saw that for twice the grid size and twice the wave number, the same algorithm failed to converge with any combination of components. \edt{When} we introduced a slight damping in the wave number, $k = (80-0.05\imath)$, multigrid converged again in $35$ F(1,1)-cycles, and thus confirms that it might be used for approximate preconditioner solves in a Krylov setup. \edt{Note that these choices of $k$ and $N$ are not ideal from an accuracy perspective, however, the above example is useful to understand the multigrid performance.}

\subsection{A short overview of the complex shifted Laplacian (CSL) preconditioner} \label{subsec:CSLprec}
The idea of preconditioning the Helmholtz problem with its (slightly) damped version as published by Erlangga \edt{et al.}\ in \cite{EVO04,EOV06,EVO06} is founded on avoiding the diverging behavior of multigrid for the original indefinite problem. The damping is brought about by a complex shift of the Laplacian, which evidently shifts its spectrum away from the origin. As a result, the discrete problems formulated with the shifted Laplacian can be tackled by multigrid. This preconditioning is perfectly applicable in the ECS context as well. The continuous version of the preconditioner for the Helmholtz problem on the ECS domain $[0,1]\cup[1,R_z]\subset \mathbb{C}$ is given by
\begin{equation}\label{eq:helmctucsl}
M^{CSL} \equiv -(L +\beta^2 k^2) \mbox{ \edt{in} } [0,1]\cup[1,R_z]\subset \mathbb{C},
\end{equation}
with a complex shift \edt{$\beta^2 = \varepsilon_1+\imath\varepsilon_2\in\mathbb{C}$. It is also important to know that in the comparison with \cite{EOV06}, the complex shift $\beta_1-\imath\beta_2$ in \cite{EOV06} is equivalent to $\varepsilon_1+\imath\varepsilon_2$ in this paper.
}

Since $M^{CSL} = -(L +\beta^2 k^2)$ and $H = -(L +k^2)$ share the same eigenvectors it is easy to see that the eigenvalues of the continuous preconditioned system $(M^{CSL})^{-1} H$ lie on a circle. Indeed, the spectrum is given by the linear fractional transformation $LF(\mu) = \frac{\mu+k^2}{\mu+\beta^2 k^2}$ that maps the complex line of eigenvalues $\lambda_{L}$ of the Laplacian to the circle through $1/\beta$, $\frac{1+R_z^2k^2}{1+R_z^2 \beta^2 k^2}$ and $1$.

The linear fractional transformation maps the negative imaginary half plane to the interior of the complex circle through the complex values $0$, $1$ and $1/\beta^2$. So the spectrum of the discretized operator $L_h$ is mapped to this region. More specifically, each line of the triangle that bounds the eigenvalues is mapped onto a segment of a circle that lies inside this region. As a result, the eigenvalues of the preconditioned discrete system $(M^{CSL}_h)^{-1}H_h$ lie away from the origin, inside a banana shaped figure that is the image of the different branch lines in the pitchfork and the tail from Figure~\ref{fig:pitchfork} as illustrated in Figure~\ref{fig:banana}.
% fig: banana
\begin{figure}[htp!]
\begin{center}
\includegraphics[width=7cm]{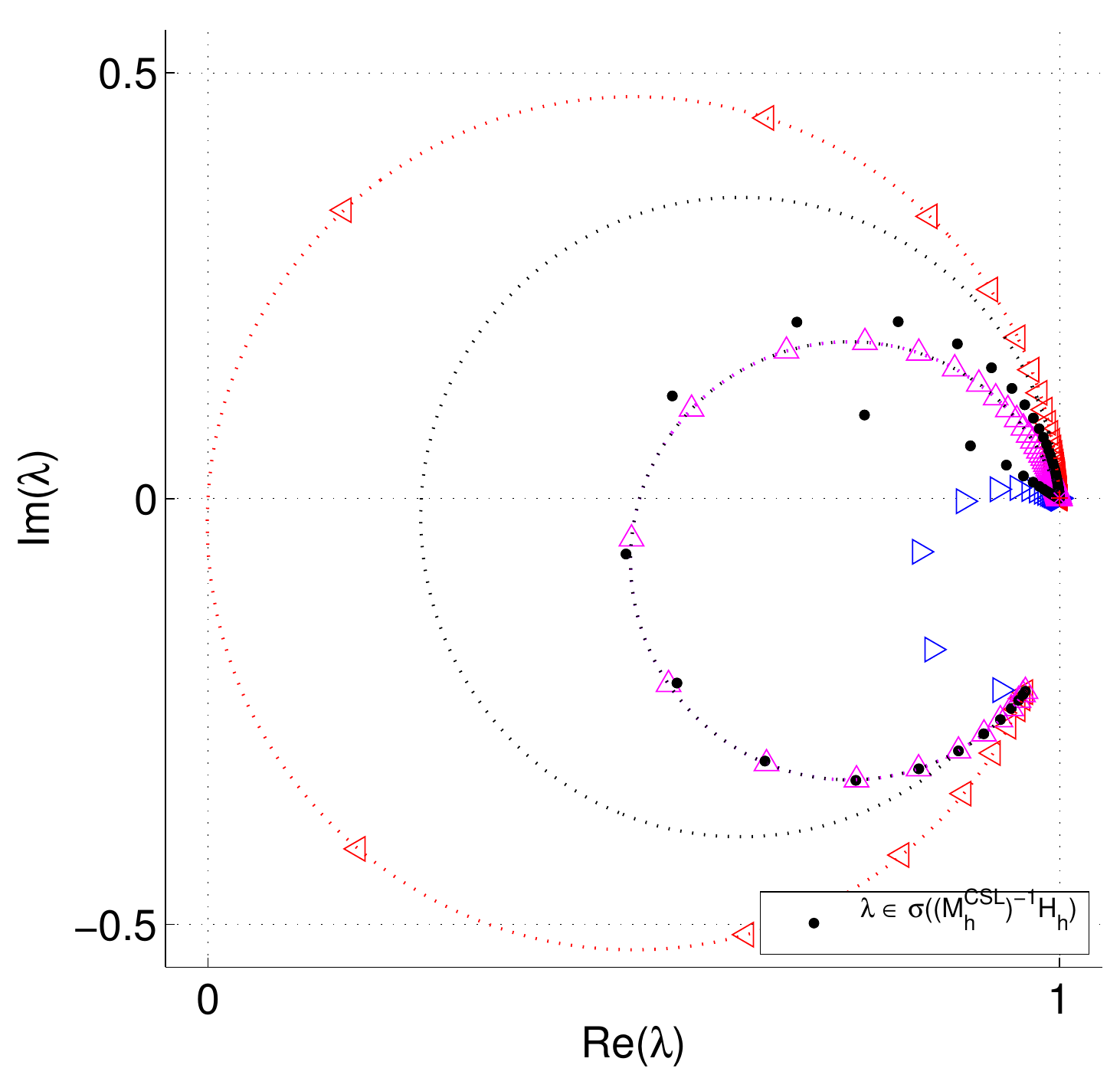}\caption{The pitchfork of the original Helmholtz operator in Figure~\ref{fig:pitchfork} is mapped to a banana (dotted lines), with the eigenvalues ($\bullet$) inside. The lines corresponding to the eigenvalue problem on the real domain and the complex contour are mapped to a circular arc ($\triangleleft$) through $0$ and a smaller circular arc ($\triangleright$) respectively. They enclose the preconditioned spectrum and the image of the intermediate complex line associated to the eigenvalue problem on the complex line $[0,R_z]$ ($\triangle$). (color online)}\label{fig:banana}
\end{center}
\end{figure}
The preconditioned system can be solved much more conveniently with Krylov subspace solvers by employing very few multigrid cycles for approximate preconditioner inversion.

\subsection{The complex \edt{stretched} grid (CSG) preconditioner} \label{subsec:CSGprec}
We readily see that the indefinite spectrum may be shifted favorably by an alternate strategy, which is more focused towards the ECS formulation. Instead of scaling the wave number, we keep it unchanged \edt{and scale} the discretization grid. To see this,
imagine discretizing the one-dimensional complex shifted Laplacian $M^{CSL} = -\frac{d^2}{d x^2}-\beta^2 k^2$
with $\beta\in\mathbb{C}$ on a real interval with constant mesh width $h\in\mathbb{R}$, and note that $(\beta^2 k^2)
h^2 = k^2(\beta^2 h^2)$. The left term in this last equality appears in the discretization of $M^{CSL}$,
while the right term can be interpreted as a quantity that appears in the discretization of the Helmholtz problem
$M_{CSG} = -\frac{d^2}{d z^2}-k^2$ defined on a straight complex line with constant mesh width \edt{$\beta h\in\mathbb{C}$}. Indeed
\edt{\begin{align}
M^{CSL}_h u_h \equiv -(\frac{1}{h^2}L_h+\beta^2 k^2)u_h = b_h &\Leftrightarrow M^{CSG}_h u_h \equiv -(\frac{1}{\beta^2 h^2}L_h+k^2)u_h = \frac{1}{\beta^2} b_h, \label{eq:CSL-CSG}
\end{align}}
so the system \edt{$M^{CSL}_h u_h = b_h$} yields the same solution as \edt{$M^{CSG}_h u_h = b_h/\beta^2$}. \edt{In this way we found the equivalent \emph{complex stretched grid (CSG)} preconditioner $M^{CSG}$. In this context we will denote the angle of $\beta$ in the complex plane as $\theta_\beta$.}

The same argument still holds on ECS domains where the contour mesh widths are \edt{already complex} and for inhomogeneous wave numbers as present in MP2 and MP3. This approach offers \edt{extra} possibilities to explore. Instead of scaling, for example, the entire spectrum away from the origin, only the problematic branch of eigenvalues close to the real axis can be scaled deeper into the complex plane. In other words, only the interior part of the grid is scaled with $\beta$, while the complex contour stays the same. In general, we can build a preconditioner by defining the original Helmholtz equation on a convenient domain, given by a coordinate transformation
\edt{\begin{equation*}
 z(x) = \left\{
  \begin{array}{ll}
    x+\imath f_\Omega(x), & \hbox{$x \in \Omega$;} \\
    x+\imath f_\Gamma(x), & \hbox{$x \in \Gamma$,}
  \end{array}
\right.
\end{equation*}}
with $f_\Omega,f_\Gamma \in \mathcal{C}^2$ increasing (e.g. linear, quadratic, \ldots) and $\displaystyle \lim_{x\to\partial\Omega}f_\Omega(x) = \displaystyle \lim_{x\to\partial\Omega}f_\Gamma(x)\neq0$.

Applied to \eqref{eq:lapctuecs} we get
\begin{equation}\label{eq:lapctucsg}
M^{CSG}u(z) \equiv -\frac{\partial^2}{\partial z^2} u(z) \mbox{ \edt{in} }
[0,z(1)]\cup[z(1),z(R)]\subset \mathbb{C},
\end{equation}
with homogeneous Dirichlet boundary conditions in $z(0)=0$ and $z(R)\equiv R_z$. Independent of the complex contour $\Gamma_{z} = [z(1),R_z]$, we now have the interior region $\Omega_z=[0,z(1)]$ complex as well.\\
\edt{In Figure~\ref{fig:ctucsg} the interior region is chosen along the line connecting the two boundaries $0$ and $R_z$ of the original ECS domain. As a result the interior mesh width is scaled from $h$ to $\beta h$; the complex contour has no extra scaling, the mesh width $\gamma h$ is preserved. Figure~\ref{fig:frenchdonut} displays the resulting preconditioned system \edt{with CSG (right) versus the CSL (left) with the same scaling in the wave number, from $k$ to $\beta k$. CSG leads to a similar} C-shaped spectrum, away from the origin, favorable for Krylov methods. However, experiments show that the spectrum of the preconditioner (see Figure~\ref{fig:pitchforkcsg}) is still bad for our current multigrid configuration. The numerical experiments with the CSG preconditioning method in the next section all use grids that are equally scaled over the entire domain, i.e.\ the interior part and complex contour. The CSG preconditioner with scaling factor $\beta$ is then related to the equivalent CSL preconditioner with scaled wave number $\beta k$, this means a complex shift $\beta^2$, as in equation \eqref{eq:CSL-CSG}.}
% fig: fig_ctucsg
\begin{figure}[htp!]
\begin{center}
\includegraphics[width=9cm]{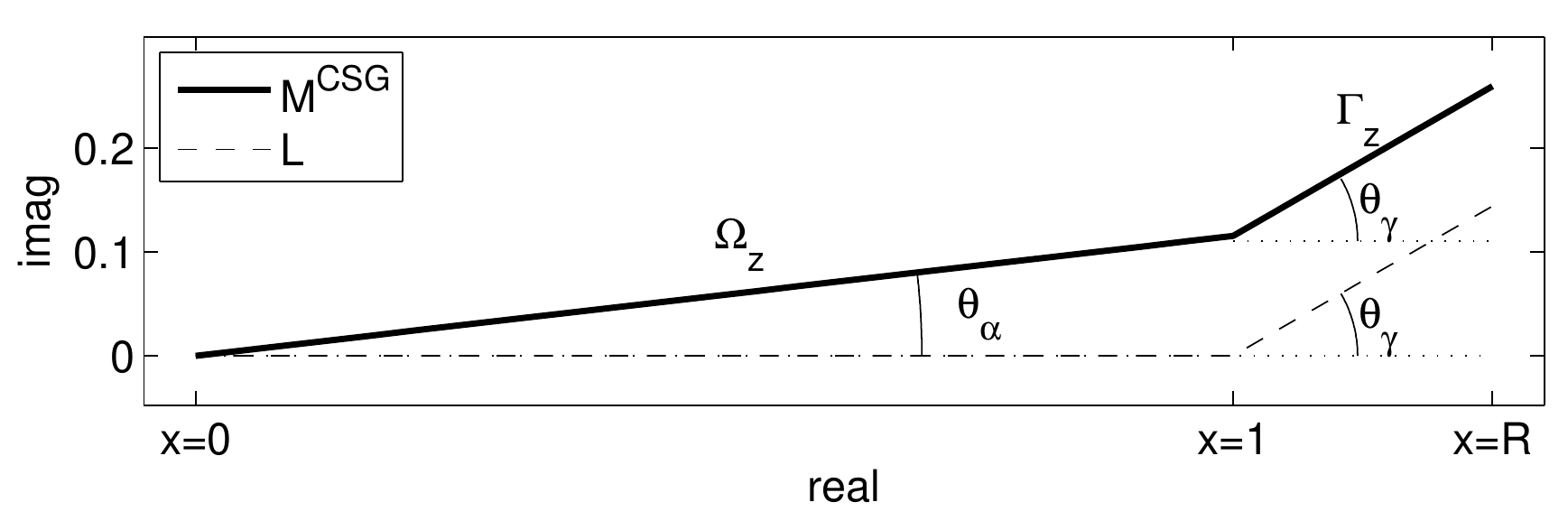}\caption{The CSG domain. The domain for $M^{CSG}$ (line) is complex \edt{stretched} in the region of interest $[0,1]$ as well, \edt{such that it aligns with} the line connecting the two boundaries $0$ and $R_z$ of the original ECS domain (dashed line).}\label{fig:ctucsg}
\end{center}
\end{figure}
% fig: pitchforkcsg
\begin{figure}[htp!]
\begin{center}
\includegraphics[width=9cm]{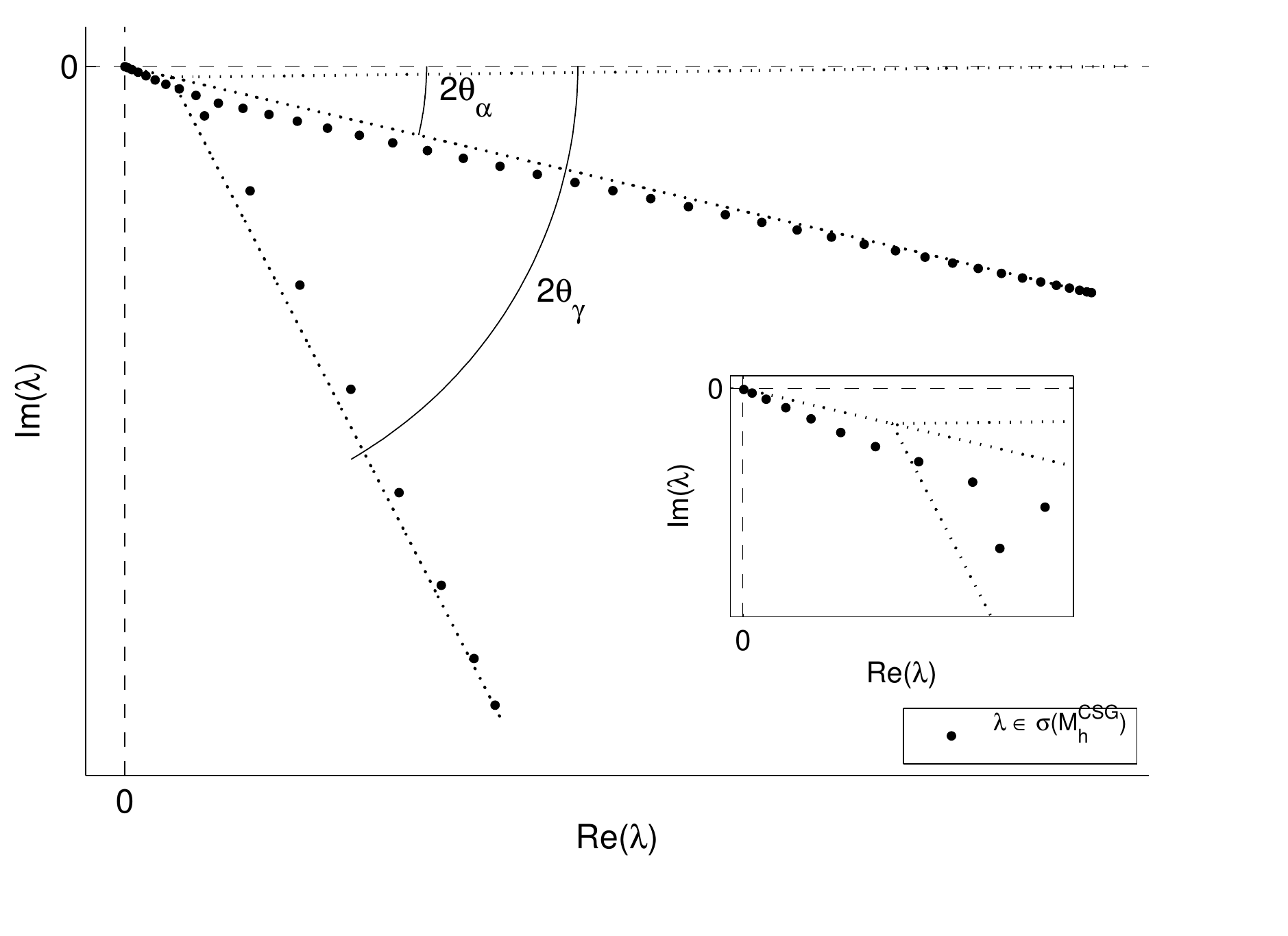}\caption{The eigenvalues of the discrete preconditioner $M_h^{CSG}$ on the domain in Figure~\ref{fig:ctucsg}. The eigenvalues lie in a narrow pitchfork ($\bullet$) that lies in the bottom half of the pitchfork of the original matrix $-L_h$ (dotted lines) from Figure~\ref{fig:pitchfork}. By choosing the domain as in Figure~\ref{fig:ctucsg}, the top branch of the new pitchfork lies along the middle line associated to the eigenvalue problem on the complex line $[0,R_z]$, i.e. the line of eigenvalues of the continuous problem.}\label{fig:pitchforkcsg}
\end{center}
\end{figure}
% fig: frenchdonut
\begin{figure}[htp!]
\begin{center}
\subfigure[Complex shifted Laplacian]{\includegraphics[width=6cm]{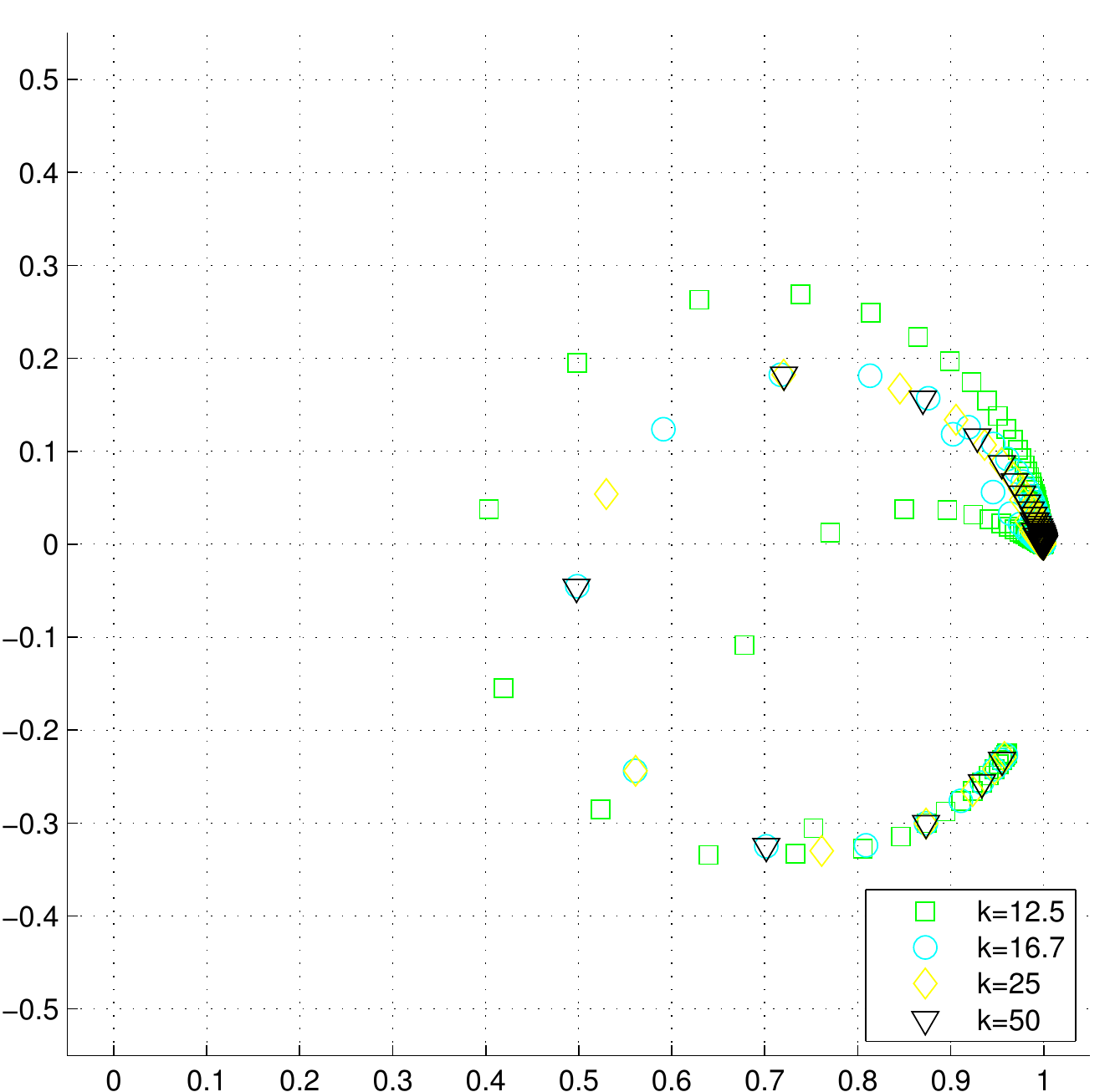}}
\subfigure[Complex stretched grid]{\includegraphics[width=6cm]{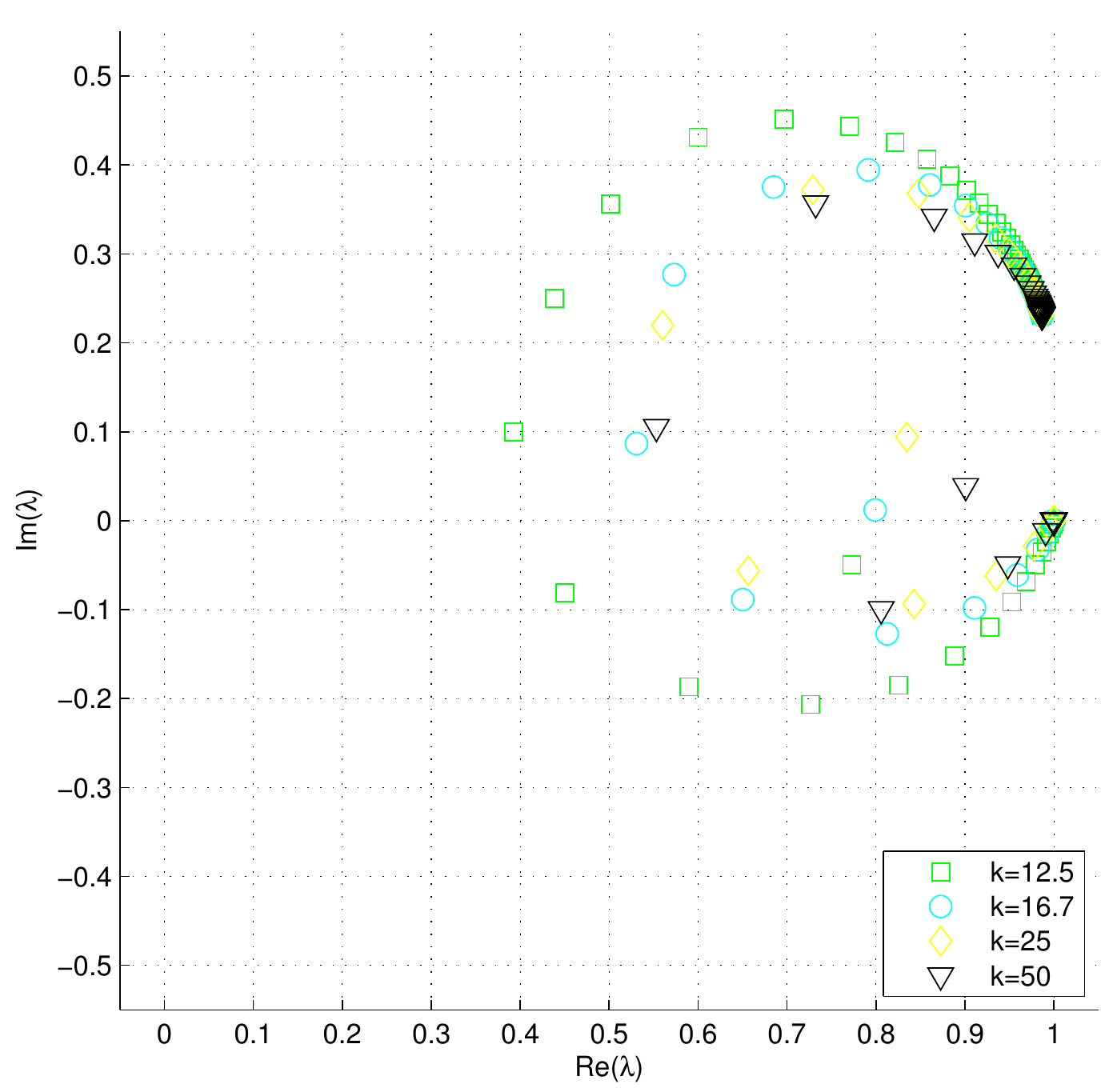}}
\caption{\edt{Left: The banana shaped spectrum of the CSL-preconditioned Helmholtz system $(M_h^{CSL})^{-1}H_h$ in Figure~\ref{fig:banana} grows slightly towards the origin for increasing $k$. The same effect was observed for Sommerfeld radiation conditions by Erlangga et al.\ in \cite{EVO06}. Right: The eigenvalues of the equivalent discrete preconditioned Helmholtz system $(M_h^{CSG})^{-1}H_h$ with the complex stretched grid preconditioner defined on the domain in Figure~\ref{fig:ctucsg}, lie on a C-shaped figure, away from the origin, with a similar dependence on $k$.}}\label{fig:frenchdonut}
\end{center}
\end{figure}

\section{Numerical experiments}\label{sec:experiments}
%--------------------------------------
% experiments: intro
%--------------------------------------
In this section we supplement the theoretical development with numerical experiments carried out on the model problems with both kinds of boundary treatment as discussed earlier. We provide a brief comparison of different multigrid components, which allowed us to choose the best set for the approximate multigrid inversion of the preconditioning operator during a Krylov solve. The details of the solver and results from the numerical experiments are displayed in tables and figures for easy access. For the numerical experiments we use the Shortley-Weller finite difference discretization \edt{\eqref{eq:shortwell}} applied to a cell-centered mesh topology. The cell-centered mesh topology is chosen because multigrid is slightly more convenient with this choice for general Robin-type first order derivative boundary conditions (henceforth BC), of which the Sommerfeld radiation BC are a special case. Moreover, the stencil is the same as for the vertex-centered case, so the results are easily carried over.

\subsection{Choice of Multigrid Components} \label{subsec:mg}
%--------------------------------------
% experiments: multigrid
%--------------------------------------
To select a set of multigrid components from the different available choices, we take MP1 with $k=80$, and use a grid size of $128^2$ obeying the accuracy constraints. ECS \edt{layers} are used, and defined by scaling the mesh \edt{widths} in these layers by \edt{$e^{\imath \theta_\gamma}$} with the angle $\theta_\gamma = \pi/6$. In this case, most of the spectrum lies in the $4^{th}$ quadrant of the complex plane, i.e., except the eigenvalues responsible for making the linear system indefinite. Through a negative imaginary shift of the Helmholtz operater equal to $-0.2$, we push the spectrum adequately towards the $4^{th}$ quadrant, thus transforming the linear system so that it is now nearly \emph{negative semi-definite}. It is imperative to comprehend that with Sommerfeld BC, the major part of the
spectrum of the original problem is in the $1^{st}$ quadrant, and therefore with Sommerfeld BC, a preconditioner formed
through a positive imaginary shift will make sense.

% fig: cyclecomp and timecomp
\begin{figure}
\subfigure[Defect reduction / mg cycles]{\includegraphics[width=7.5cm]{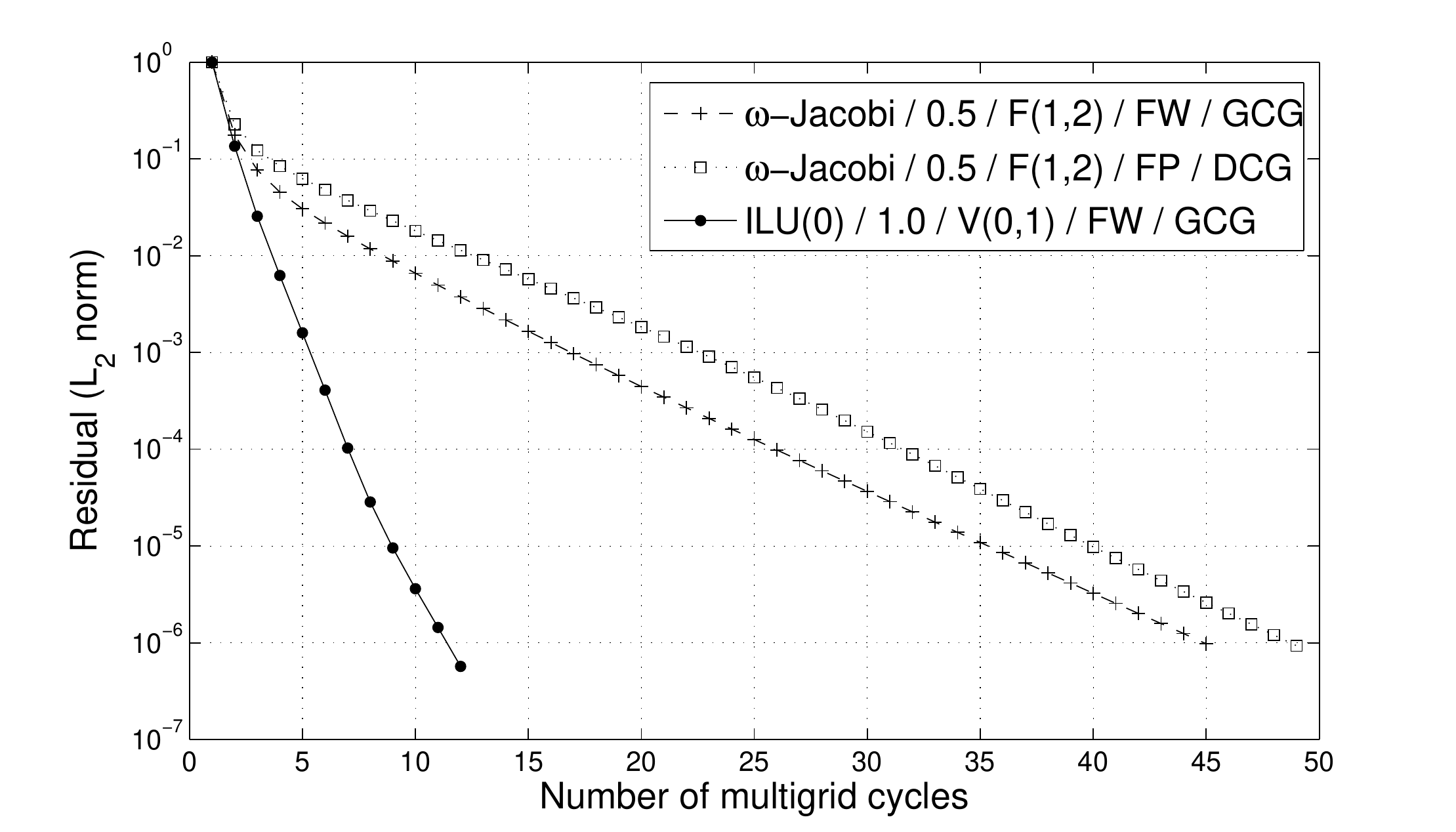}}
\subfigure[Defect reduction / CPU time]{\includegraphics[width=7.5cm, height=4.2cm]{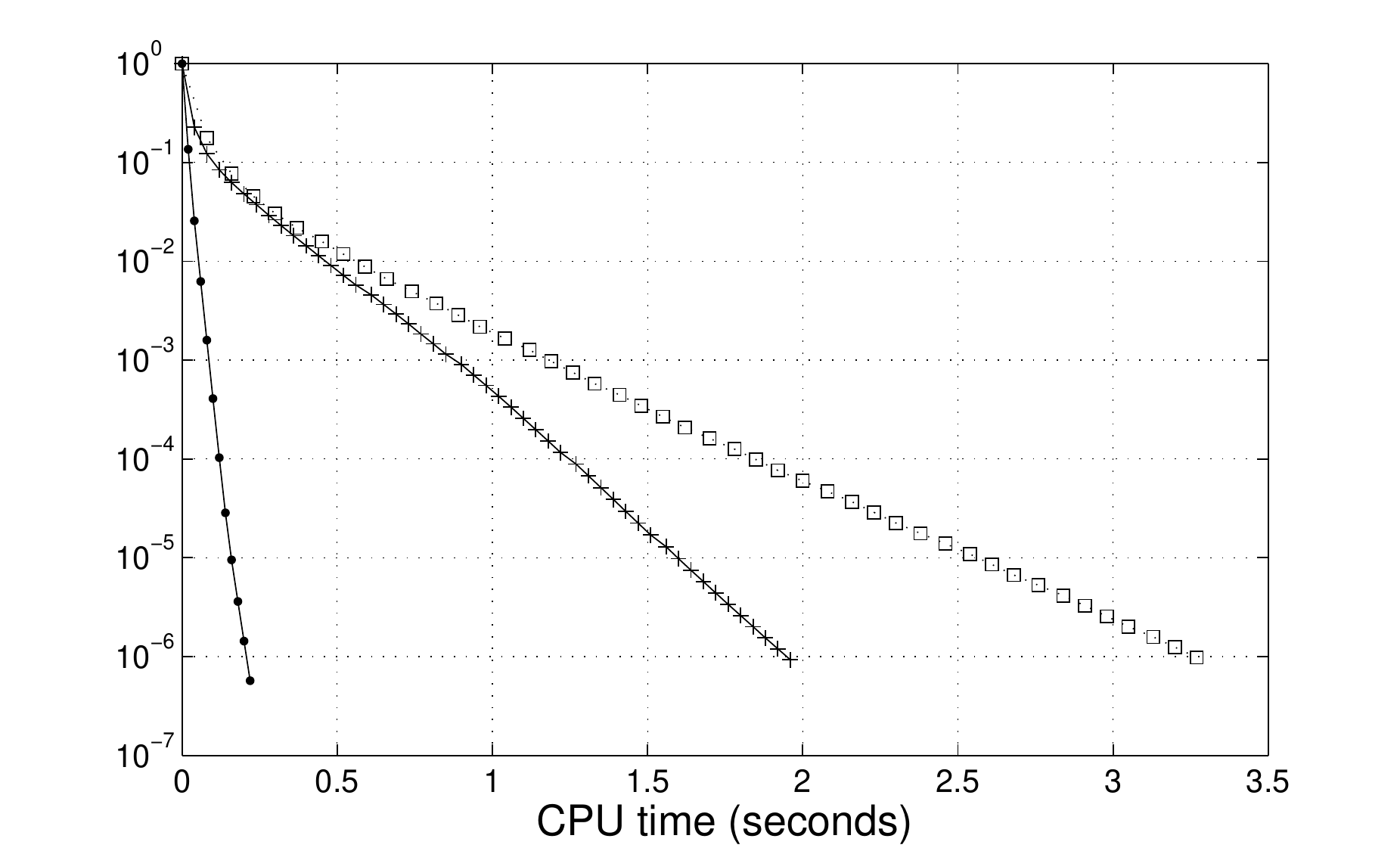}}
\caption{Multigrid performance on the preconditioner formed through the CSL technique ($\beta = (1,-0.2)$) for MP1, with $k = 80$, $128$ interior points plus $2 \times 32$ points in the ECS \edt{layers} along each dimension (on either sides). In all the cases, prolongation was done through bilinear interpolation. The legend incorporates the smoother, the relaxation parameter, the restriction method, the
cycle type, and the coarse grid operator. DCG stands for Direct Coarse Grid operator, while GCG stands for the Galerkin
Coarse Grid operator.} \label{fig:mgMP1prec}
\end{figure}

From Figure \ref{fig:mgMP1prec}, we immediately recognize the set of multigrid components that work best in the present
context. ILU(0) smoothing, Full Weighted (FW) averaging as restriction, bilinear interpolation as prolongation, and the
Galerkin formulation for the coarse grid operator. These components are harnessed within a V(0,1)-cycle and yield an
algorithm that does an excellent job on the preconditioner. Therefore, these are the components that we will invariably
use in Section \ref{subsec:numex}. For the sake of completeness, we also checked out multigrid performance with various
combinations of $\omega$-Jacobi, with an under-relaxation of $0.5$, employed in F-cycles. The comparison was
very thorough, and included tests with the Four-Point averaging \cite{Trot01} as restriction, as well as with direct
discretization on the coarse grids. The results are presented in Figure \ref{fig:mgMP1prec}, are self explanatory, and
indicate the most viable set of components very clearly.

\subsection{Numerical Experiments} \label{subsec:numex}
%--------------------------------------
% experiments: results
%--------------------------------------
In this section before putting up the results of the numerical experiments, we first sort out the minimum grid size
requirement for MP2 and MP3, which are Helmholtz problems with strongly varying wave numbers. As the wave numbers are
infinite at the origin, we take into consideration the highest discrete wave number that the discretized problem can
attain under the cell-centered mesh topology. For reasons of brevity, the complete analysis is not shown here. We
however, brief the steps involved in the analysis. First, we transform the problem into a new coordinate system; so
that $\widetilde{x} = x/r$, and $\widetilde{y} = y/r$. This new system is in the unit square
domain and is therefore dimensionless. The next step is to evaluate the supremum of the transformed wave number, which
\edt{occurs at the origin for the model problems}. For MP2, this is \edt{straightforward}. We use this value in the accuracy condition and are
led to the maximum mesh \edt{widths} that must be obeyed. For MP2 this is given in Table \ref{tab:meshsizesMP2}.

\begin{table}
\centering
\begin{tabular}{|c c|c|c|c|c|c|}
\hline
 & $k$ & 1 & 2 & 3 & 4 & 5 \\
$\nu$ & & & & & & \\ \hline 0  & & 0.625 & 0.312 & 0.208 & 0.156 & 0.125 \\ \hline 1  & & 0.360 & 0.255 & 0.188 &
0.147 & 0.120 \\ \hline 2  & & 0.279 & 0.221 & 0.173 & 0.140 & 0.116 \\ \hline 3  & & 0.236 & 0.197 & 0.161 & 0.133 &
0.112 \\ \hline 4  & & 0.208 & 0.180 & 0.151 & 0.127 & 0.109 \\ \hline 5  & & 0.188 & 0.167 & 0.143 & 0.122 & 0.105 \\
\hline 6  & & 0.173 & 0.156 & 0.136 & 0.118 & 0.102 \\ \hline 7  & & 0.161 & 0.147 & 0.130 & 0.114 & 0.100 \\ \hline 8
& & 0.151 & 0.139 & 0.125 & 0.110 & 0.097 \\ \hline 9  & & 0.143 & 0.133 & 0.120 & 0.107 & 0.095 \\ \hline 10 & & 0.136
& 0.127 & 0.116 & 0.104 & 0.093 \\ \hline
\end{tabular}
 \caption{Maximum mesh \edt{width} limits for \edt{MP2}}
 \label{tab:meshsizesMP2}
\end{table}
Here, we will solve two test-cases of MP2. One is characterized by $\nu = 7$ and $k=2$, while the other is
characterized by $\nu = 1$ and $k=4$. For both of these test-cases, the minimum grid size (from a multigrid
perspective) is $384^2$.

For MP3, we immediately see that the origin cannot be substituted into the dimensionless wave number (due to the
singularity). We observe that ($h_x/2, h_y/2)$ gives the largest wave number that the discrete
problem can attain. Using this in the accuracy condition results in a quadratic equation, which we solve to get the mesh
sizes. These are scaled back to $(0,r)^2$ and are given in Table \ref{tab:meshsizesMP3}.

\begin{table}
\centering
\begin{tabular}{|c c|c|c|c|c|c|}
\hline
 & $k$ & 1 & 2 & 3 & 4 & 5 \\
$r$ & & & & & & \\ \hline \edt{$\forall ~50<r<200$}  & & 0.095 & 0.089 & 0.082 & 0.075 & 0.068 \\ \hline
\end{tabular}
 \caption{Upper limit on the mesh \edt{width} for MP3 with respect to \edt{$k$}}
 \label{tab:meshsizesMP3}
\end{table}

We form two test-cases with MP3 as well. The first one is characterized by $r=90$ and $k=2$ and requires a minimum
interior grid size of $1024^2$. The second test case is much more severe. It is formed by $r=150$ and $k=4$, and
requires a minimum interior grid size of $2048^2$ for an acceptable resolution.

\begin{table}
%\hspace{-5mm}
\small
\edt{
\begin{tabular}{c|c|l||c|c|c|c|c|} \cline{2-7}
& \multirow{4}{*}{BC} & \multirow{4}{*}{Preconditioning operator}                    & MG perform.,  &   MGP &      MGP    &  MGP \\
&                     &                                                              & on precond.   & Bi-CGSTAB & IDR(4)  & IDR(8) \\ \cline{4-7}
&                     &                                                              & Conv., \# iter &   matvec  & matvec  & matvec \\
&                     &                                                              &   cputime     &  cputime  & cputime & cputime \\ \hline \hline
\multirow{12}{*}{\begin{sideways}MP1: $k=160$\end{sideways}}
& \multirow{6}{*}{\begin{sideways} Sommerfeld BC \end{sideways}}
  & \multirow{2}{*}{CSL, with shift = $-1 + 0.2\imath$}                              & 0.31,~~12 &   79      &    72     &  76        \\
& &                                                                                  & 0.52 sec. & 4.75 sec. & 4 sec.    & 5.35 sec.  \\ \cline{3-7}
& & \multirow{2}{*}{CSG, with angle = $\displaystyle-\frac{\pi}{28}$}                & 0.24,~~10 &   74      &  70       & 72         \\
& &                                                                                  & 0.47 sec. & 4 sec.    & 3.98 sec. & 4.40 sec.  \\ \cline{3-7}
& & \multirow{2}{*}{CSL, with shift = $-e^{-2\imath\nicefrac{\pi}{28}}=-0.97+0.22\imath$} & 0.26,~~11 &   74      &  69       & 72         \\
& &                                                                                  & 0.48 sec. & 4.78 sec. & 4.26 sec. & 5.20 sec.  \\ \cline{2-7}\\[-2.8ex]\cline{2-7}
& \multirow{6}{*}{\begin{sideways} ECS Layers \end{sideways}}
  & \multirow{2}{*}{CSL, with shift = $-1-0.2\imath$}                               & 0.29,~~12 &    59     &    62     & 60         \\
& &                                                                                  & 1.13 sec. & 9.15 sec. & 9.11 sec. & 9.61 sec.  \\ \cline{3-7}
& & \multirow{2}{*}{CSG, with angle = $\displaystyle \frac{\pi}{28}$}                & 0.24,~~10 &    58     &    62     & 56         \\
& &                                                                                  & 1.12 sec. & 9 sec.    & 9.23 sec. & 9.62 sec.  \\ \cline{3-7}
& & \multirow{2}{*}{CSL, with shift = $-e^{2\imath\nicefrac{\pi}{28}}=-0.97-0.22\imath$}  & 0.24,~~10 &    58     &    59     & 54         \\
& &                                                                                  & 1.10 sec. & 9.04 sec. & 9.06 sec. & 9.30 sec.  \\ \cline{2-7}
 \end{tabular}
\caption{Experimental results - Iterative solution of MP1. One $V(0,1)$ multigrid cycle is used for approximate inversion of the preconditioners. This preconditioning is used with Bi-CGSTAB, IDR(4) and IDR(8). The interior domain is a unit square, discretized with $256$ points along both dimensions. With ECS layers, there are $2 \times 64$ additional points per dimension, belonging to the layers. With ECS formulation each edge of the domain is endowed with an absorbing layer.}}
\label{tab:MP1}
\end{table}

\edt{\begin{rem}[Reading and comparing the experimental results] The experimental results are summarized in Tables \ref{tab:MP1}, \ref{tab:MP2}, and \ref{tab:MP3}. Table \ref{tab:MP1} accounts for one, and Tables \ref{tab:MP2} and \ref{tab:MP3} account for two problems (each) derived from using different values of the parameters in the model problems. This is mentioned vertically in the first column of each table. Each of these problems give two different discrete versions, when formulated once with the Sommerfeld approximation (on the boundaries), and second, when formulated with ECS layers. This is also vertically marked in the tables. Next, each discrete formulation is solved with Krylov methods, using three preconditioners; (1) the CSL preconditioner with the shift $k^2\rightarrow\beta^2k^2$ where $\beta^2 = \varepsilon_1 + \imath\varepsilon_2\equiv1+\varepsilon_2$ (with $\varepsilon_2$ having the smallest absolute value for which the chosen multigrid method converges), (2) the CSG preconditioner having the scaling $h\rightarrow\beta h$ where $\beta = e^{\imath\theta_\beta}$ (with $\theta_\beta$ the smallest angle for which the chosen multigrid method converges), and finally, (3) the CSL preconditioner again, with a complex shift equal to $\beta^2 = e^{2\imath\theta_\beta}$ (with $\theta_\beta$ as used with the CSG preconditioner). Each row in the tables accounts for one of the above mentioned preconditioners. Each row (after specifying the preconditioner) lists the following information:
\begin{enumerate}
	\item Multigrid performance on the preconditioner taken as a standalone problem. \emph{Conv.}, is the average convergence factor per cycle, and \emph{\#iter} is the number of cycles consumed to reduce the relative residual by 7 orders of magnitude.
	\item The number of multigrid preconditioned (MGP) matvecs employed by Bi-CGSTAB. Note that each iteration of Bi-CGSTAB consists of two such matvecs. We chose matvecs over conventional iterations to have a better idea of the total number of multigrid cycles used, as well as to have a fair comparison with IDR($s$).
	\item The number of multigrid preconditioned matvecs employed by IDR(4).
	\item The number of multigrid preconditioned matvecs employed by IDR(8).
\end{enumerate}
Within each row, the three Krylov methods can be compared against one another with the same preconditioner. Within the stacks containing three rows each, the relative performance of the three preconditioners on an identical discrete problem can be checked.
\end{rem}}

In this paper we use one multigrid $V(0,1)$-cycle for each preconditioning step that is involved
in the solver. Convergence of the Krylov solver is determined by a check on the relative residual going below a tolerance
value of $10^{-6}$, i.e., the algorithm stops after the $m^{th}$ iteration if:
\begin{align}
\frac{ \lVert d_m \rVert_2}{\lVert d_0 \rVert_2} < 10^{-6}
\end{align}
$\lVert d_i \rVert_2$ is the defect (measured in the discrete $L_2$-norm) after the $i^{th}$ solver iteration.

\begin{table} [!h]
%\hspace{-5mm}
\small
\edt{
\begin{tabular}{c|c|l||c|c|c|c|c|} \cline{2-7}
& \multirow{4}{*}{BC} & \multirow{4}{*}{Preconditioning operator}                    & MG perform.,  &   MGP &      MGP    &  MGP \\
&                     &                                                              & on precond.   & Bi-CGSTAB & IDR(4)  & IDR(8) \\ \cline{4-7}
&                     &                                                              & Conv., \# iter &   matvec  & matvec  & matvec \\
&                     &                                                              &   cputime     &  cputime  & cputime & cputime \\ \hline \hline
\multirow{12}{*}{\begin{sideways}MP2: $\nu=7, k=2$\end{sideways}}
& \multirow{6}{*}{\begin{sideways} Sommerfeld BC \end{sideways}}
  & \multirow{2}{*}{CSL, with shift = $-1 + 0.35\imath$}                             & 0.25,~~10 &   123      &    120     &  122      \\
& &                                                                                  & 1.10 sec. & 18.8 sec.  &  17.5 sec. & 20.0 sec. \\ \cline{3-7}
& & \multirow{2}{*}{CSG, with angle = $\displaystyle-\frac{\pi}{20}$}                & 0.25,~~10 &   113      &  115       & 115       \\
& &                                                                                  & 1.08 sec. & 15.8 sec.  & 14.6 sec.  & 16.2 sec. \\ \cline{3-7}
& & \multirow{2}{*}{CSL, with shift = $-e^{-2\imath\nicefrac{\pi}{20}}=-0.95+0.31\imath$} & 0.26,~~11 &   115      &  114       & 113       \\
& &                                                                                  & 1.20 sec. & 16.8 sec.  & 16.0 sec.  & 17.1 sec. \\ \cline{2-7}\\[-2.8ex]\cline{2-7}
& \multirow{6}{*}{\begin{sideways} ECS Layers \end{sideways}}
  & \multirow{2}{*}{CSL, with shift = $-1-0.34\imath$}                               & 0.17,~~8  &    71     &    72     & 74         \\
& &                                                                                  & 2.70 sec. & 31.2 sec. & 31.3 sec. & 32.3 sec.  \\ \cline{3-7}
& & \multirow{2}{*}{CSG, with angle = $\displaystyle \frac{\pi}{20}$}                & 0.19,~~9  &    68     &    73     & 69         \\
& &                                                                                  & 2.60 sec. & 31.5 sec. & 32.1 sec. & 33.2 sec.  \\ \cline{3-7}
& & \multirow{2}{*}{CSL, with shift = $-e^{2\imath\nicefrac{\pi}{20}}=-0.95-0.31\imath$}  & 0.18,~~9  &    70     &    77     & 71         \\
& &                                                                                  & 2.61 sec. & 32.9 sec. & 34.0 sec. & 33.8 sec.  \\ \hline \hline \hline
\multirow{12}{*}{\begin{sideways}MP2: $\nu=1, k=4$\end{sideways}}
& \multirow{6}{*}{\begin{sideways} Sommerfeld BC \end{sideways}}
  & \multirow{2}{*}{CSL, with shift = $-1+0.27\imath$}                               & 0.39,~~15 &   179     &   175     &  177       \\
& &                                                                                  & 1.60 sec. & 29.3 sec. & 27.2 sec. & 29.5 sec.  \\ \cline{3-7}
& & \multirow{2}{*}{CSG, with angle = $\displaystyle-\frac{\pi}{27}$}                & 0.51,~~21 &   161     &  164      & 161        \\
& &                                                                                  & 2.30 sec. & 26.2 sec. & 27.0 sec. & 27.1 sec.  \\ \cline{3-7}
& & \multirow{2}{*}{CSL, with shift = $-e^{-2\imath\nicefrac{\pi}{27}}=-0.97+0.23\imath$} & 0.53,~~22 &   161     &  160      & 159        \\
& &                                                                                  & 2.39 sec. & 26.1 sec. & 24.7 sec. & 26.8 sec.  \\ \cline{2-7}\\[-2.8ex]\cline{2-7}
& \multirow{6}{*}{\begin{sideways} ECS Layers \end{sideways}}
  & \multirow{2}{*}{CSL, with shift = $-1-0.27\imath$}                               & 0.34,~~13 &    138    &    125    & 131        \\
& &                                                                                  & 4.20 sec. & 43.4 sec. & 40.0 sec. & 44.0 sec.  \\ \cline{3-7}
& & \multirow{2}{*}{CSG, with angle = $\displaystyle \frac{\pi}{26}$}                & 0.43,~~18 &    115    &    123    & 114        \\
& &                                                                                  & 5.50 sec. & 36.0 sec. & 38.3 sec. & 37.9 sec.  \\ \cline{3-7}
& & \multirow{2}{*}{CSL, with shift = $-e^{2\imath\nicefrac{\pi}{26}}=-0.97-0.24\imath$}  & 0.44,~~17 &    116    &    120    & 124        \\
& &                                                                                  & 5.50 sec. & 35.8 sec. & 38.1 sec. & 39.3 sec.  \\ \cline{2-7}
 \end{tabular}
\caption{Experimental results - Iterative solution of two different discrete problems obtained from MP2 (by using different values of $\nu$ and $k$). One $V(0,1)$ multigrid cycle is used for approximate inversion of the preconditioners. This preconditioning is used with Bi-CGSTAB, IDR(4) and IDR(8). For both problems, the interior domain is a square of 50 units, discretized with $384$ points along both dimensions. With ECS layers, there are $128$ additional points per dimension, belonging to the layers. Contrary to MP1 the formulation with ECS Layers only has these layers on two edges of the domain, the north and the east.}}
\label{tab:MP2}
\end{table}

\edt{It is a well-known fact that due to indefiniteness multigrid cannot work directly with the Helmholtz problem as a solver. The CSL and the CSG preconditioners are attempts to maneuver the spectrum slightly so that its indefiniteness can be reduced, and that the eigenvalues may be slightly shifted so as to bring them closer to the positive or to the negative definite regimes in the complex plane. Roughly speaking, the CSL \emph{translates} the spectrum, while the CSG \emph{rotates} it to accomplish the objective. A preconditioner, therefore, can also be build as a hybrid between the CSL and the CSG approaches, i.e., with a general CSL shift $\varepsilon_1 + \imath\varepsilon_2$ combined with a general CSG rotation angle $\theta_\beta$. However, during experimentation, the latter preconditioner did not prove any better than either of these two approaches used in isolation, and is hence not presented.}

\edt{\begin{rem}[On choosing the CSL and the CSG parameters]
An automation can be set up which starts from a given imaginary shift for the CSL preconditioner, or a given angle for the CSG precondtioner, and monitors the residual norm obtained after successive multigrid cycles. In such an automatic routine, absolute values of the shift size or the angles may be reduced to a benchmark for which the given multigrid method just converges (say in 10-20 cycles). Parametrized with this shift (or angle), the preconditioner may be  used in the Krylov method with approximate solves for preconditioning. Note that such a selection routine may only run once, and decide upon the shifts to be employed for all later Krylov iterations. In this paper, however, we just resort to doing the above described process manually.
\end{rem}}

We first observe from Table \ref{tab:MP1} that the performance of the CSL preconditioner with both shifting strategies is similar \edt{to the performance of the CSG preconditioner}. The multigrid inversion of the preconditioners as well as the overall numerical solution method are very good for MP1.

\begin{table} [!h]
%\hspace{-5mm}
\small
\edt{
\begin{tabular}{c|c|l||c|c|c|c|c|} \cline{2-7}
& \multirow{4}{*}{BC} & \multirow{4}{*}{Preconditioning operator}                    & MG perform.,  &   MGP &      MGP    &  MGP \\
&                     &                                                              & on precond.   & Bi-CGSTAB & IDR(4)  & IDR(8) \\ \cline{4-7}
&                     &                                                              & Conv., \# iter &   matvec  & matvec  & matvec \\
&                     &                                                              &   cputime     &  cputime  & cputime & cputime \\ \hline \hline
\multirow{12}{*}{\begin{sideways}MP3: $r=90, k=2$\end{sideways}}
& \multirow{6}{*}{\begin{sideways} Sommerfeld BC \end{sideways}}
  & \multirow{2}{*}{CSL, with shift = $-1 + 0.38\imath$}                             & 0.44,~~18 &   207    &    214    &  232      \\
& &                                                                                  & 15.3 sec. & 4m 32s   &  4m 25s   & 4m 58s    \\ \cline{3-7}
& & \multirow{2}{*}{CSG, with angle = $\displaystyle-\frac{\pi}{17}$}                & 0.41,~~16 &   205    &  207      & 207       \\
& &                                                                                  & 13.1 sec. & 4m 24s   & 4m 13s    & 5m 11s    \\ \cline{3-7}
& & \multirow{2}{*}{CSL, with shift = $-e^{-2\imath\nicefrac{\pi}{17}}=-0.93+0.36\imath$} & 0.45,~~18 &   198    &  204      & 223       \\
& &                                                                                  & 15.4 sec. & 4m 17s   & 4m 19s    & 5m 2s     \\ \cline{2-7}\\[-2.8ex]\cline{2-7}
& \multirow{6}{*}{\begin{sideways} ECS Layers \end{sideways}}
  & \multirow{2}{*}{CSL, with shift = $-1-0.38\imath$}                               & 0.28,~~11 &    142   &    145    & 141         \\
& &                                                                                  & 14.8 sec. & 4m 52s   & 4m 34s    & 4m 56s  \\ \cline{3-7}
& & \multirow{2}{*}{CSG, with angle = $\displaystyle \frac{\pi}{17}$}                & 0.27,~~11 &    139   &    139    & 134        \\
& &                                                                                  & 15.3 sec. & 4m 49s   & 4m 30s    & 4m 48s  \\ \cline{3-7}
& & \multirow{2}{*}{CSL, with shift = $-e^{2\imath\nicefrac{\pi}{17}}=-0.93-0.36\imath$}  & 0.27,~~11 &   138    &   137     & 137         \\
& &                                                                                  & 15.3 sec. & 4m 48s   & 4m 35s    & 5m 1s  \\ \hline \hline \hline
\multirow{12}{*}{\begin{sideways}MP3: $r=150, k=4$\end{sideways}}
& \multirow{6}{*}{\begin{sideways} Sommerfeld BC \end{sideways}}
  & \multirow{2}{*}{CSL, with shift = $-1 + 0.40\imath$}                             & 0.39,~~15 &   681     &   682     &  669       \\
& &                                                                                  & 53.9 sec. & 1h 3m     & 1h 1m 12s & 1h         \\ \cline{3-7}
& & \multirow{2}{*}{CSG, with angle = $\displaystyle-\frac{\pi}{17}$}                & 0.37,~~15 &   620     &  650      & 652        \\
& &                                                                                  & 55.9 sec. & 58m 15s   & 59m 25s   & 1h 2m 12s  \\ \cline{3-7}
& & \multirow{2}{*}{CSL, with shift = $-e^{-2\imath\nicefrac{\pi}{17}}=-0.93+0.36\imath$} & 0.39,~~15 &   623     &  673      & 614        \\
& &                                                                                  & 53.4 sec. & 56m 46s   & 1h 2m     & 55m        \\ \cline{2-7}\\[-2.8ex]\cline{2-7}
& \multirow{6}{*}{\begin{sideways} ECS Layers \end{sideways}}
  & \multirow{2}{*}{CSL, with shift = $-1-0.40\imath$}                               & 0.37,~~15 &    436    &    413    & 423        \\
& &                                                                                  & 1m 26s    & 1h 10m    & 1h 4m     & 1h 8m      \\ \cline{3-7}
& & \multirow{2}{*}{CSG, with angle = $\displaystyle \frac{\pi}{17}$}                & 0.31,~~13 &    390    &    387    & 392        \\
& &                                                                                  & 1m 15s    & 56m 39s   & 55m 46s   & 1h 1m  \\ \cline{3-7}
& & \multirow{2}{*}{CSL, with shift = $-e^{2\imath\nicefrac{\pi}{17}}=-0.93-0.36\imath$}  & 0.30,~~13 &    395    &    390    & 383        \\
& &                                                                                  & 1m 16s    & 57m 55s   & 55m 48s   & 59m 56s  \\ \cline{2-7}
 \end{tabular}
\caption{Experimental results - Iterative solution of two different discrete problems obtained from MP3 (by using different domain sizes and values of $k$). One $V(0,1)$ multigrid cycle is used for approximate inversion of the preconditioners. This preconditioning is used with Bi-CGSTAB, IDR(4) and IDR(8). The interior domain is a square of $r$ units. For $r=90, k=2$ (upper six rows), the problem is discretized with $1024$ interior points along both dimensions. In these problem, when ECS layers are used, there are $256$ additional points per dimension, belonging to the layers. For $r=150, k=4$, the problem is discretized with 2048 interior points per dimension. In the ECS formulation there are 512 extra points (per dimension) in these layers. For both problems, ECS layers are only required, and used, at the north and the east edges of the domain.}}
\label{tab:MP3}
\end{table}

In Table \ref{tab:MP2}, we have solved two test-cases of MP2 with different characterizing parameters. The values of
$\nu$ and $k$ distinguish the test-cases. From Table \ref{tab:meshsizesMP2}, we read that the mesh \edt{width} requirement
for both these test-cases is the same (0.147), and therefore they can be solved on an identical grid of interior size $384^2$.
We observe that although the supremum of the wave number in the domain is identical for both the test-cases, the second
one takes twice the time needed to compute the solution of the first one, for MP2.

\edt{\begin{rem}[Smooth ECS transition]
Plausibly, the sharp rotation of the linear ECS contour may not be very desirable for the discretization of some applications. For this reason, we checked out the iterative performance of the preconditioners and the solvers, for MP2, formulated with an ECS contour that rotates gradually in 256 very small equally sized angles. The one-dimensional analog of the domain is shown in Figure \ref{fig:smoothECS}. The performance of multigrid (on such preconditioners) as well as the Krylov methods turned out very similar to that listed in the tables.
\end{rem}}

The results from experiments on the harder model problem, i.e., MP3 \edt{whose spectrum is more indefinite compared to the other model problems here}, are laid out in Table \ref{tab:MP3}. \edt{We clearly see an advantage in the number of matvecs on problems with the ECS layers against the Sommerfeld BC. However, each matvec of an ECS problem takes longer due to the additional grid points in the layer, hence this advantage is not seen in the CPU time.} Figure \ref{fig:krylov}, details the convergence history of the first test-case of MP3. This is
important in order to check out any possible stagnation trend in the convergence. We find however, that in all the
cases Bi-CGSTAB and \edt{IDR(4) show a well-matched} convergence behavior. \edt{IDR(4), however, seems to be slightly faster in comparison with} Bi-CGSTAB as it avoids the slight initial stagnation evident with Bi-CGSTAB in the figure. With a comparison between
two Krylov methods, it is often also interesting to observe both the CPU time as well as the solver matvecs due to
possible differences in the subspace minimization strategies in different methods. \edt{This reveals that although for many test-runs IDR(8) reports lesser matvecs than IDR(4), it really does not provide any concrete enhancement. Evidently, the reduction in matvecs is easily offset by the increase in CPU time (plus an associated increase in storage which is not shown)}.

% fig: bisom and idrsom and biecs and idrecs
\begin{figure}
\vspace{-1cm}
\centering
\subfigure[Bi-CGSTAB on MP3 with Sommerfeld BC]{\includegraphics[width=7cm]{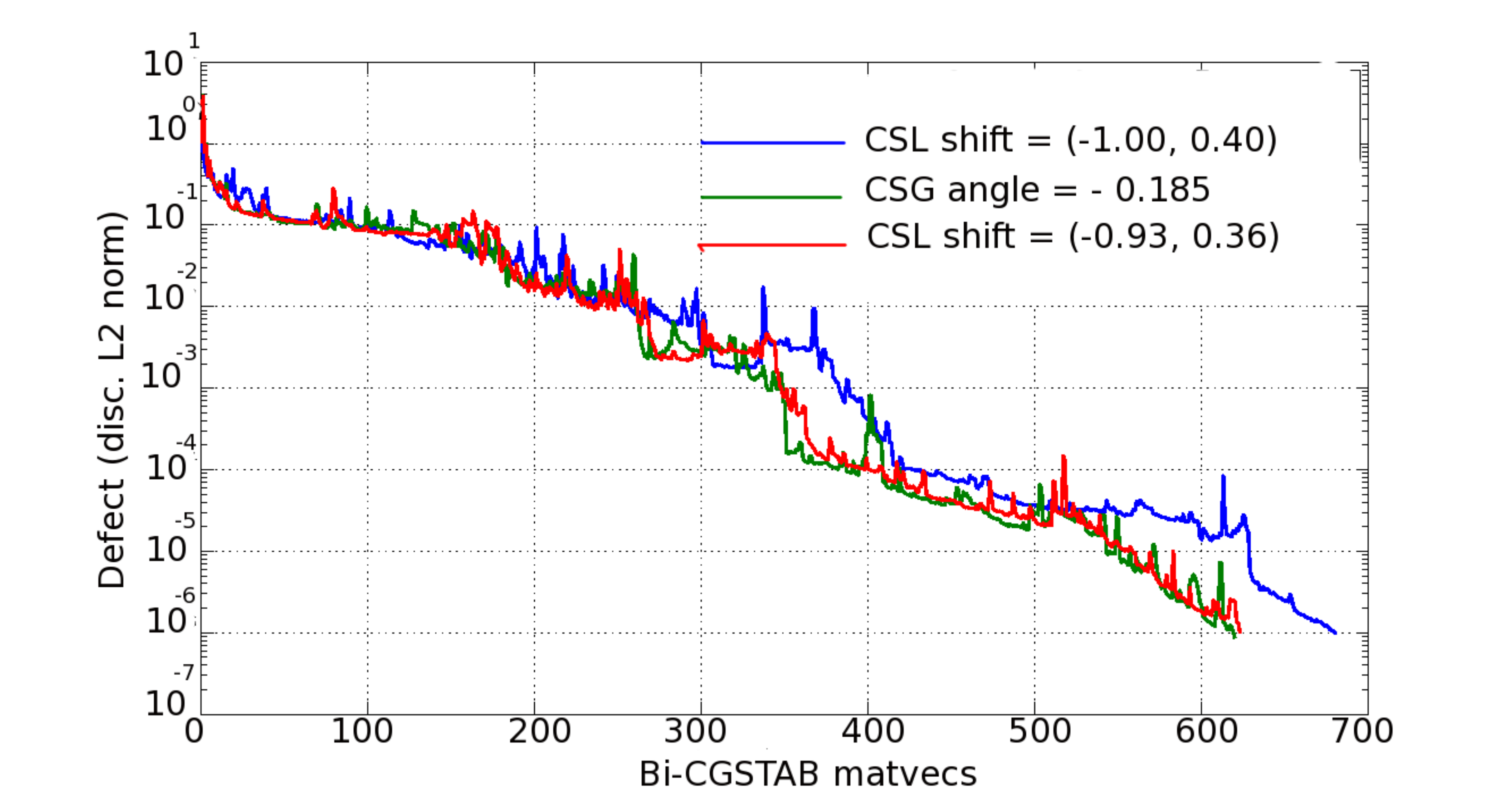}}
\hspace{-5mm} \subfigure[IDR(4) on MP3 with Sommerfeld BC]{\includegraphics[width=7cm]{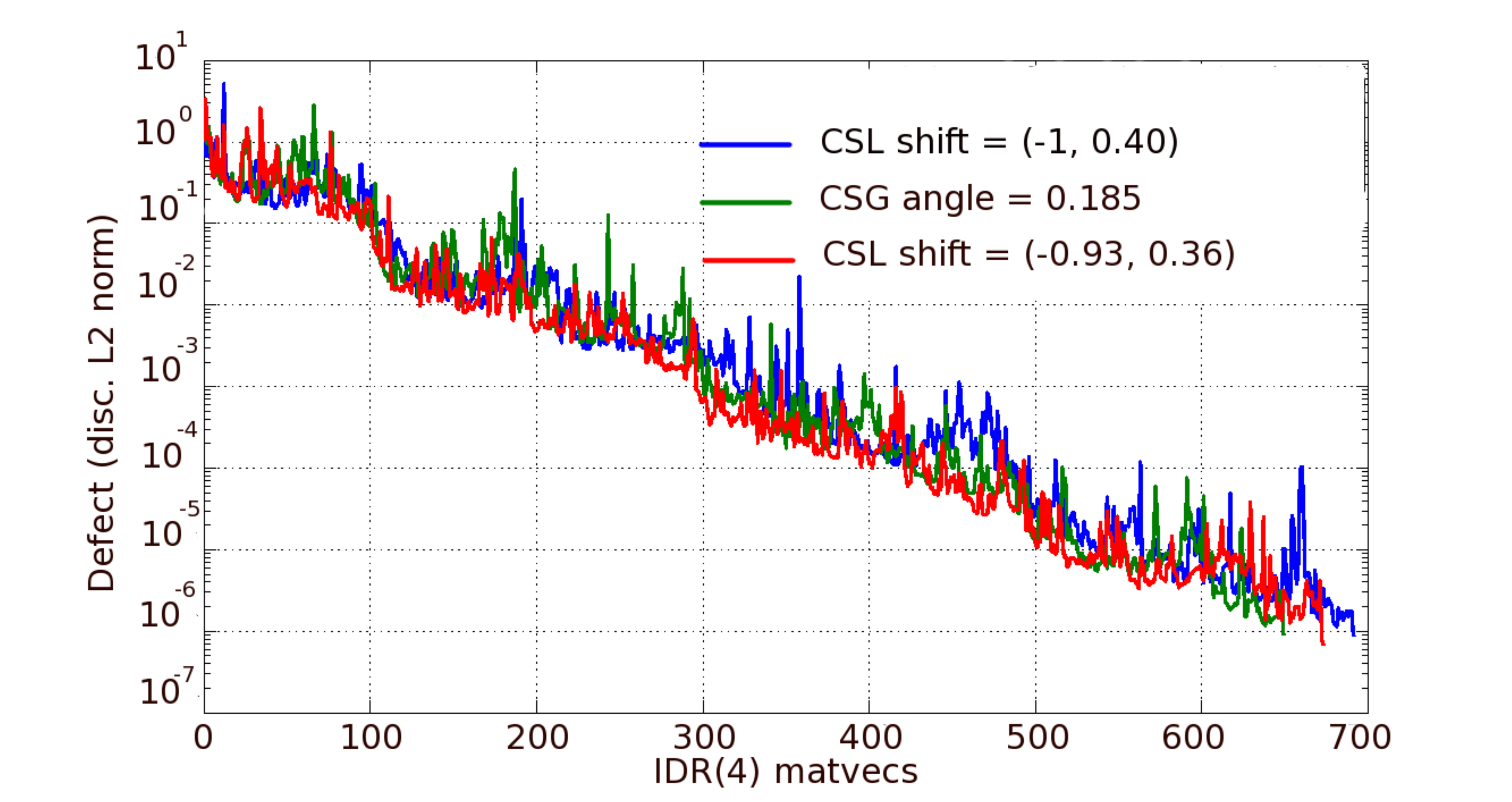}}  \\
\subfigure[Bi-CGSTAB on MP3 with ECS \edt{layers}]{\includegraphics[width=7cm]{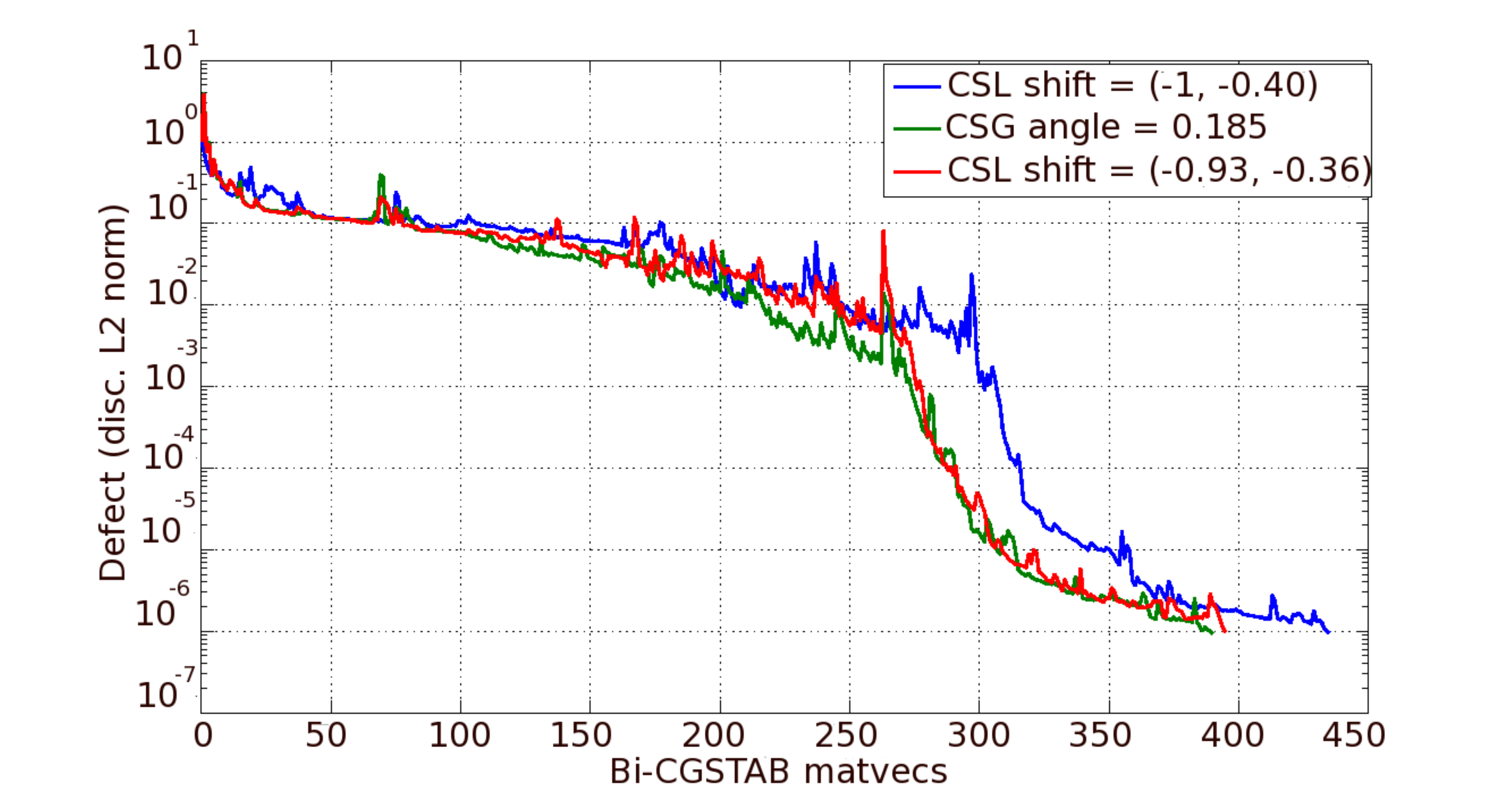}}
\hspace{-5mm} \subfigure[IDR(4) on MP3 with ECS \edt{layers}]{\includegraphics[width=7cm]{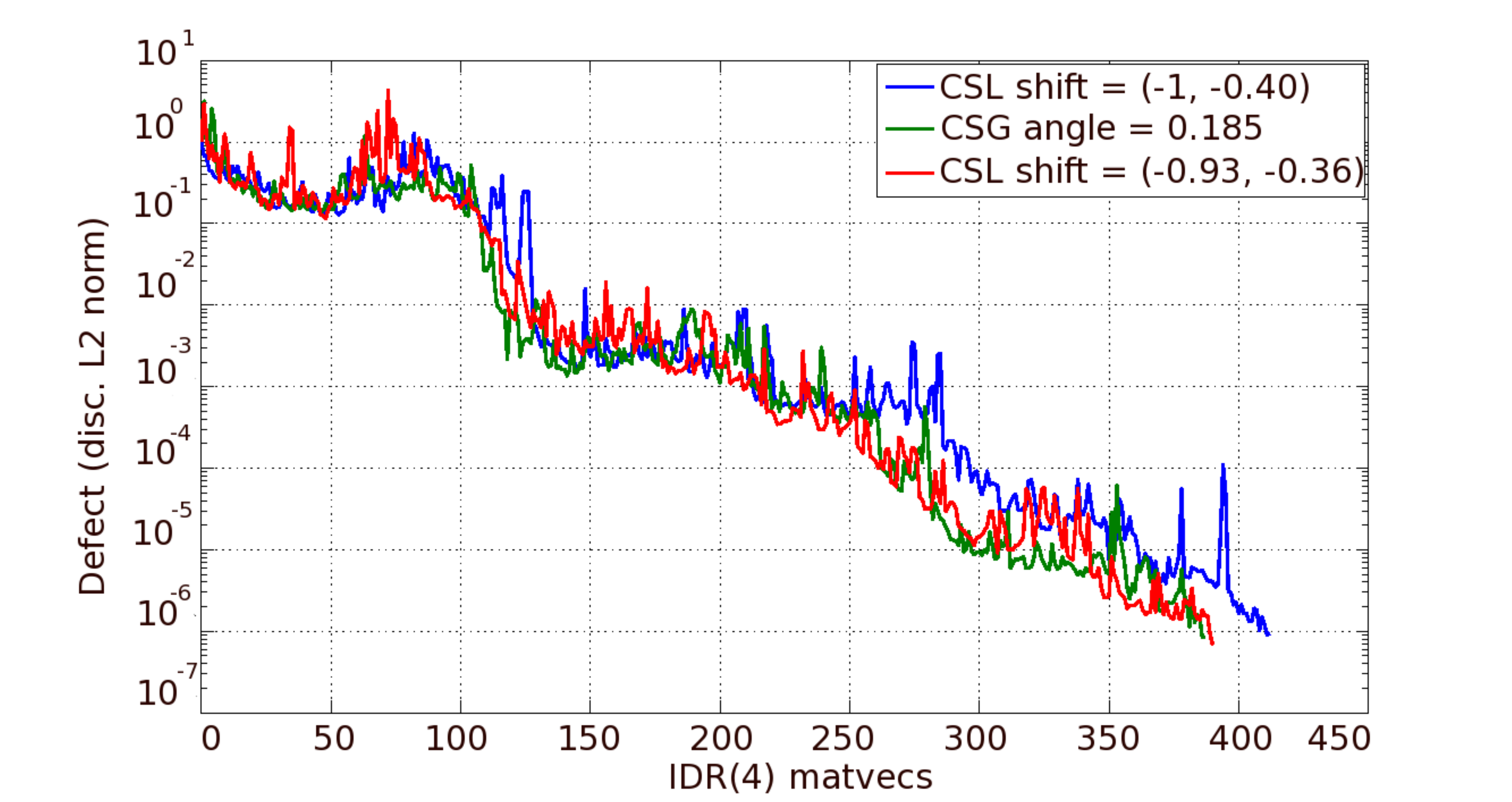}}
\caption{Bi-CGSTAB and IDR(4) defect reduction history with the proposed CSG preconditioner as well as the CSL preconditioner for MP3 with $2048^2$ points in the interior of the domain. ECS \edt{layers} (where used) have added $512$ points per dimension in the exterior layers. Expected convergence traits for Bi-CGSTAB and IDR(4) are clearly visible. (color online)}
\label{fig:krylov}
\end{figure}

We also checked out the solver performance for a larger MP3 problem with size $4096^2$ (not reported in the tables).
The CPU time already runs over 4 hours for a problem of this size, although the performance of the iterative method is
of the same quality as the other problems. This is due to the huge complexity of the test.

\edt{\begin{rem}[Testing the situation with GMRES]
We ran multigrid preconditioned GMRES on MP2, with $\nu=1, k=4$ (ECS formulation). Multigrid was used to approximately invert the CSG preconditioning operator exactly as specified in the second last row of Table \ref{tab:MP2}, during each GMRES iteration. GMRES reported 103 iterations to reduce the relative residual by seven orders of magnitude, and reported the consumed CPU time as 175 seconds. The same level of accuracy can be reached with Bi-CGSTAB or IDR($s$) (as depicted in Table \ref{tab:MP2} in roughly one fifth of the time required for GMRES. GMRES therefore qualifies well for analytic purposes, or where the problem size is small.
\end{rem}}

\edt{\begin{rem}[Platform specification and storage scheme]
These experiments were performed serially on an Intel Xeon (8-core) with 32 gigabytes of RAM. We used Matlab v7 as the testing platform. Some processes such as matvec computations, and backslash inversion of some matrices in the multigrid heirarchy were performed in parallel using all the 8 processors. We implemented some linear algebra routines in mex-C to speed up the computations, and used the \emph{Compressed Row Sparse Format} for storage of the matrix data. Matlab uses \emph{Compressed Column Sparse Format} for such needs. Both the CSL, as well as the CSG preconditioners were stored as sparse matrices in either of these formats.
\end{rem}}

\section{Conclusions and outlook} \label{sec:conclude}
In this paper, we have studied the Helmholtz problems that arise in mathematical models for single and double ionization of atomic and molecular systems. The problems typically have \edt{regions in space where the wave number can be large} and absorbing boundary conditions \edt{are often implemented with complex stretched grids}.

We developed and analyzed the iterative properties of the exterior complex scaled (ECS) absorbing boundary layers for the
indefinite Helmholtz equation. \edt{We have analyzed the spectral properties of the discrete problem formulated with \edt{ECS layers} and found bounds around the spectrum of the Helmholtz operator for constant wave numbers. These bounds were derived for a finite difference Shortley-Weller discretization and linear exterior scaling. Although the theoretical estimates are limited to this model, numerical tests suggest that they are valid for quite general cases.}

An alternative \edt{preconditioner} to the \edt{complex shifted} Laplacian (CSL) is introduced where instead of shifting the wave number, the grids are given a complex scaling. We call this \edt{complex stretched grid} (CSG) preconditioning. We introduced two new benchmark problems that are derived from break-up problems in quantum mechanics and have strongly varying spatially dependent wave numbers. They provide a tough benchmark for future development of iterative Helmholtz solvers.

The CSG and the CSL preconditioners are related and perform similarly for most problems. The preconditioner inversion is performed approximately by a geometric multigrid method, based on ILU(0) relaxation, V(0,1)-cycles, FW restriction, bilinear prolongation, and the Galerkin coarse grid operator. Different numerical experiments with the CSG preconditioner on the model problems, show that our multigrid method is more stable for problems with constant wave numbers. With spatially dependent wave numbers we see that our multigrid method requires a higher damping of the operator to render the indefiniteness manageable. However, with greater damping, the spectra of the preconditioner and the operator get farther apart and this \edt{prolongs} Krylov convergence. Although not listed, we tried different alternatives for multigrid preconditioning, such as F-cycles and more than one V-cycle, however, the best results from a CPU time aspect are shown.

In future, we intend to explore two different avenues \edt{to improve the solver} for \edt{the} model problems \edt{introduced} here. The main
problem is the reduction of CPU time for problems formulated with ECS layers. This can be brought about by first reducing
the complexity of the problem by applying adaptive mesh refinement (AMR) techniques for discretization, say, near the
evanescent layers of the solution. And secondly, by identifying a way of shifting only the most problematic part of the
spectrum, since every cluster of eigenvalues that is needlessly shifted can significantly decrease the performance of the preconditioning. This may results in a better solver than the current state-of-the-art, and is thus our future goal.

\section*{Acknowledgements}
This research was partially supported by FWO-Flanders through grant G.0174.08 as well as by a starting grant from the University of Antwerp, Belgium.

\bibliography{man2_arxiv}
\end{document}